\documentclass[a4paper,11pt]{article}
\usepackage[hidelinks]{hyperref}
\usepackage[T1]{fontenc}
\usepackage[utf8]{inputenc}
\usepackage{graphicx}
\usepackage[margin=1in]{geometry}
\usepackage{amsmath,amsthm}
\usepackage{amssymb, amsfonts}
\usepackage{mathrsfs}
\usepackage{url}
\usepackage{xcolor}
\usepackage{latexsym}
\usepackage[ruled,vlined]{algorithm2e}
\usepackage{color}
\usepackage{subfigure}
\usepackage[section]{placeins}
\usepackage{natbib}
\usepackage{multirow}
\usepackage{float}
\usepackage{algpseudocode}

\usepackage{caption}
\usepackage{verbatim}
\usepackage{threeparttable}
\definecolor{darkblue}{rgb}{0,0,0.4}

\newtheorem{theorem}{Theorem}[section]

\graphicspath{{./}} 
\def\bx{{\mathbf x}}
\def\by{{\mathbf y}}
\begin{document}
\title{A Thresholding Based Operator-Splitting Method for Curvature-Regularized Surface Reconstruction}

\author{Yang Hu\thanks{Department of Mathematics, Hong Kong Baptist University, Kowloon Tong, Hong Kong.
		Email: 24481408@life.hkbu.edu.hk.}, 
Hao Liu\thanks{Department of Mathematics, Hong Kong Baptist University, Kowloon Tong, Hong Kong.
		Email: haoliu@hkbu.edu.hk.}, 
Dong Wang\thanks{School of Science and Engineering, The Chinese University of Hong Kong (Shenzhen), Shenzhen, Guangdong 518172, China \& Shenzhen International Center for Industrial and Applied Mathematics, Shenzhen Research Institute of Big Data, Guangdong 518172, China \& Shenzhen Loop Area Institute, Guangdong 518048, China. Email: wangdong@cuhk.edu.cn.}, 
Tieyong Zeng\thanks{Institute for Advanced Study, Beijing Normal-Hong Kong Baptist University, Zhuhai, Guangdong, China. School of Mathematics and Statistics, Guangzhou Nanfang College, Guangzhou, Guangdong, China. Email: tieyongzeng@bnbu.edu.cn.}}

\date{}
\maketitle

\begin{abstract}
    Surface reconstruction from point clouds is a fundamental problem in computational geometry with broad applications in computer graphics, medical imaging, and manufacturing. In this paper, we propose an efficient method for solving a curvature-regularized surface reconstruction model. To avoid the computational cost associated with reinitialization in traditional level set methods, we represent the reconstructed surface by an indicator function. By introducing an auxiliary variable, we reformulate the original optimization problem as the computation of the steady-state solution of an initial value problem. We then develop an operator-splitting method to decompose the resulting problem into two tractable subproblems, one of which can be efficiently solved by iterative thresholding. The proposed approach combines the advantages of curvature regularization, operator splitting, and threshold dynamics. Numerical experiments show that the proposed method is computationally efficient and can accurately reconstruct sharp corners and concave features.
\end{abstract}
\section{Introduction}
Over the past few decades, surface reconstruction from point clouds has attracted considerable attention in computational geometry and related fields \cite{bole1991three}. Point clouds are typically acquired through optical sensing techniques such as laser scanning. Recovering an accurate surface from such data is a fundamental task in a wide range of applications, including computer graphics \cite{wang1991characterizing, calakli2011ssd}, medical imaging \cite{khan2018single}, and manufacturing \cite{bi2010advances}, and many others \cite{berger2016survey}. The main objective of surface reconstruction is to reconstruct a meaningful and accurate surface that faithfully preserves the geometric features of the underlying point cloud, while maintaining high computational efficiency.

A seminal contribution in this area is \cite{zhao2000implicit}, which introduced the classical distance-based implicit surface model:
\begin{align}
    E(\Gamma) = \left(\int_{\Gamma} |d(\mathbf{x})|^{p}\, ds \right)^{1/p},
    \label{eq.model.dis}
\end{align}
where \(\Gamma\) denotes the surface to be reconstructed, \(d(\mathbf{x})\) is the distance from a point \(\mathbf{x}\) to the point cloud, and \(ds\) is the surface element. The reconstructed surface is obtained by seeking a surface \(\Gamma\) that minimizes \(E(\Gamma)\). In \cite{zhao2000implicit}, the level set method was adopted to represent the surface and solve the resulting optimization problem. Motivated by \eqref{eq.model.dis}, \cite{he2020curvature} introduced a curvature-regularized model that better captures concave features and sharp corners, while \cite{law2025approximated} proposed an \(L_1\)-curvature-regularized model. To reconstruct surfaces from incomplete and noisy data, \cite{liu2026pca} developed a PCA-based variational model. Another PCA-based model for dimension reduction on Riemannian manifolds was proposed in \cite{liu2017level}. In \cite{liang2013robust}, the authors formulated a model inspired by image segmentation, in which the distance function is used to construct an edge-indicator function. A directional \(G\)-norm-based model was proposed in \cite{cui2023surface} for reconstructing surfaces from noisy point clouds. When normal information is available, \cite{liu2008implicit} introduced a regularization term that aligns the surface normal with the given normals, and \cite{kazhdan2006poisson} proposed Poisson surface reconstruction by solving a Poisson equation. Other approaches include graph-cut-based methods \cite{hornung2006robust, wan2012reconstructing,shi2012curvature,paris2006surface} and methods based on the Gauss formula \cite{lin2022surface,lu2018surface}. Related surface fairing problems have also been studied in \cite{lai2013ridge,brito2013fast,LiuTaiGlowinski2022}.

Surface processing problems usually involve nonconvex optimization and are therefore difficult to solve directly. To address this challenge, several efficient algorithms have been developed. One important class is the alternating direction method of multipliers (ADMM), which introduces Lagrange multipliers and updates the variables alternately. ADMM has been used in \cite{law2025approximated,cui2023surface,lai2013ridge,estellers2012efficient}. Another important class is operator-splitting methods \cite{he2020curvature,liu2026pca}, which decompose a complicated problem into several easier subproblems. It has been shown in \cite{deng2019new,duan2022fast} that operator-splitting methods can be more efficient than ADMM. Recently, an accelerated operator-splitting method was proposed in \cite{wu2024variational}.

Many of the aforementioned methods use the level set method to represent surfaces. For numerical stability, the level set function is typically required to remain close to a signed distance function, and a standard way to enforce this property is reinitialization \cite{osher2003levelset}. However, reinitialization requires solving a local first-order initial value problem every few iterations, which incurs a considerable computational cost. To address this issue, \cite{wang2021iterative} employed a phase-field representation of the surface and used threshold dynamics to solve \eqref{eq.model.dis}. Threshold dynamics has been widely used in problems such as interface motion, wetting, and image processing \cite{merriman1994motion,wang2019improved,esedoglu2006threshold,wang2017iterative}. In \cite{wang2021iterative}, surfaces are represented by indicator functions, and the target energy is approximated via short-time heat flow. Consequently, the resulting algorithm replaces reinitialization with iterative thresholding, leading to a significant improvement in computational efficiency. Nevertheless, this approach is restricted to the basic distance-based model \eqref{eq.model.dis} and cannot be directly extended to models with nonlinear regularization terms such as curvature.

Motivated by \cite{wang2021iterative,he2020curvature}, we propose an efficient method for a curvature-regularized surface reconstruction model. To avoid the additional computational cost associated with reinitialization, we adopt the phase-field formulation and represent the surface by an indicator function. We first introduce an auxiliary variable to decouple the nonlinearity and reformulate the resulting optimization problem as the computation of the steady-state solution of an initial value problem. We then apply an operator-splitting method for time discretization, which leads to two subproblems that can both be solved efficiently. The proposed approach can be viewed as a hybrid of operator-splitting method and threshold dynamics, since one of the subproblems is solved by iterative thresholding. Our main contributions are summarized as follows:
\begin{enumerate}
    \item We propose a curvature-regularized surface reconstruction method based on a phase-field formulation, operator splitting, and thresholding. The use of an indicator-function representation completely eliminates the reinitialization step required by traditional level set methods, while preserving the ability of curvature regularization to recover sharp corners, edges, and concave surface features.

    \item Through extensive numerical experiments, we demonstrate that the proposed method faithfully reconstructs surfaces with complex geometric features from point clouds, achieving competitive reconstruction quality with substantially reduced computational cost compared to level-set-based approaches.

    \item The proposed framework establishes a new connection between threshold dynamics and curvature-regularized variational models, providing a principled pathway for extending thresholding-based algorithms to a broader class of surface reconstruction 
    problems.
\end{enumerate}

The remainder of this paper is organized as follows. Section \ref{sec.model} introduces the curvature-regularized model and its phase-field formulation, together with auxiliary variables for decoupling the nonlinearity. Section \ref{sec.scheme} presents the proposed operator-splitting method for the reformulated problem. In particular, Section \ref{sec.optcondition} reformulates the problem as an initial value problem, and Section \ref{sec.osm} develops a two-step splitting strategy, leading to two subproblems for each iteration. The solution to each subproblem is discussed in Section \ref{sec.subproblem}. Numerical implementation details are presented in Section \ref{sec.Numerical}. Section \ref{sec.experiment} reports systematic two-dimensional and three-dimensional experiments to demonstrate the effectiveness of the proposed method and compare it with existing approaches. Finally, Section \ref{sec.conclusion} concludes the paper.

\section{A phase-field formulation of the curvature based model}
\label{sec.model}
Let $\mathcal{C} \subset \mathbb{R}^n$ (for $n=2$ or $3$) be the given set of unorganized data points. We define a distance function $d: \Omega \to \mathbb{R}$ on the computational domain $\Omega \subset \mathbb{R}^n$ as:
\begin{align}
d(\mathbf{x}) = \inf_{\mathbf{y} \in \mathcal{C}} |\mathbf{x} - \mathbf{y}|,
\label{eq:distance}
\end{align}
which measures the distance from any point $\mathbf{x}$ to the point cloud.
We seek a surface $\Gamma$ (a closed curve in two-dimensional space or a surface in three-dimensional space) that represents the point cloud. This is formulated as a variational problem where the optimal surface minimizes an energy functional comprising a data-fidelity term and a curvature-based regularization term \cite{he2020curvature}:
\begin{align}
E(\Gamma) = \int_{\Gamma} d^{\alpha}(\mathbf{x}) ds + \eta \int_{\Gamma} \kappa^{\beta}(\Gamma)  ds,
\label{eq.ener.0}
\end{align}
where $ds$ denotes the surface area element. The first term $\int_{\Gamma} d^{\alpha}(\mathbf{x}) ds$ drives the surface toward the point cloud by penalizing the distance to $\mathcal{C}$, with the exponent $\alpha \ge 0$ controlling the sensitivity of the penalty. The second term, $\eta \int_{\Gamma} \kappa^{\beta}(\Gamma)  ds$, regularizes the surface by penalizing high curvature, where $\kappa(\Gamma)$ is the mean curvature of $\Gamma$. The parameter $\eta \ge 0$ balances the influence of the regularization, and the exponent $\beta \ge 0$ modulates the penalty on curvature. We choose even values of $\beta$ so that the curvature term is positive.

To deal with topological changes and represent the interface implicitly, we employ an indicator function $u: \Omega \to \{0,1\}$:
\begin{align}
u(\mathbf{x}) = \begin{cases}
1 & \text{if }  \mathbf{x} \in \Omega_{\Gamma},\\
0 & \text{otherwise},
\end{cases}
\label{eq:indicator}
\end{align}
where $\Omega_{\Gamma}$ is the region enclosed by $\Gamma$. In this representation, the surface $\Gamma$ is implicitly defined as the discontinuity set of $u$.

It is difficult to directly evaluate and optimize $E(\Gamma)$ since it is a surface integral. We resolve this difficulty by connecting $E(\Gamma)$ to $u$. Specifically, we use a result from \cite{Miranda2007} that relates the integral of a function over a surface to a short-time heat flow. For a small parameter $\tau > 0$, the boundary integral of a quantity $f$ can be approximated by $\sqrt{\pi/\tau} \int_{\Omega} f  u[G_{\tau} * (1-u)]  d\mathbf{x}$, where $G_{\tau}$ is the Gaussian kernel:
\begin{align}
G_{\tau}(\mathbf{x}) = \frac{1}{(4\pi\tau)^{n/2}} \exp\left(-\frac{|\mathbf{x}|^2}{4\tau}\right).
\label{eq:heat_kernel}
\end{align}
Following \cite{Hu2020ThresholdDynamics}, we approximate the energy functional \eqref{eq.ener.0} by the following diffuse energy functional:
\begin{align}
E^{\tau}(u):=\sqrt{\frac{\pi}{\tau}}\int_{\Omega}\sqrt{d^{\alpha}+\eta \kappa^{\beta}(u)}\, u
\left[G_{\tau}\ast \left(\sqrt{d^{\alpha}+\eta \kappa^{\beta}(u)}(1-u)\right) \right] d\mathbf{x}.
\label{eq.ener.10}
\end{align}
Here, \(\kappa(u)\) denotes a numerical approximation of the mean curvature computed from \(u\). It was shown in \cite[Theorem 4]{Hu2020ThresholdDynamics} that, as \(\tau \to 0\), \(E^{\tau}(u)\) converges to the original energy functional in  \eqref{eq.ener.0}.

The energy $E^{\tau}(u)$ in \eqref{eq.ener.10} is highly nonlinear and nonconvex in $u$. To manage this complexity, we introduce an auxiliary variable $q$ to represent the curvature term, thereby decoupling the nonlinearity. This leads to a constrained optimization energy:
\begin{align}
    \min_{u \in \mathcal{B},  q \in \mathcal{K},q = \kappa(u)} \sqrt{\frac{\pi}{\tau}}\int_{\Omega}\sqrt{d^{\alpha}+\eta q^{\beta}}  u  \left[G_{\tau}\ast \left( \sqrt{d^{\alpha}+\eta q^{\beta}} (1-u) \right) \right] d\mathbf{x},
\label{eq.ener.2}
\end{align}
where 
$$
\mathcal{B} := {u \in BV(\Omega, \{0,1\}) }, \quad \mathcal{K} := {q \in BV(\Omega, \mathbb{R}) }
$$
with $BV(\Omega, \mathbb{R})$ being the space of functions of bounded variation.

We remove the constraint by introducing characteristic functions.
Define the sets and their corresponding characteristic functions as
\begin{align*}
	&\Sigma: = \{u \in BV(\Omega,\mathbb{R})|u \in \{0,1\}\}, \quad I_{\Sigma}(u)= \begin{cases}
		0 & \mbox{ if } u\in \Sigma,\\
		+\infty & \mbox{ if } u\notin \Sigma,
	\end{cases}
    \\&S:=\{(u,q)| u \in \Sigma, q \in \mathcal{K}, q = \kappa(u)\}, \quad I_S(u,q)=\begin{cases}
		0 & \mbox{ if } (u,q)\in S,\\
		+\infty & \mbox{ if } (u,q)\notin S.
    \end{cases}
\end{align*}
Denote 
\begin{align}
	J(u,q)=\sqrt{\frac{\pi}{\tau}}\int_{\Omega}\sqrt{d^{\alpha}+\eta q^{\beta}}u\left[G_{\tau}\ast \left(\sqrt{d^{\alpha}+\eta q^{\beta}}(1-u) \right) \right]d\mathbf{x}.
\end{align}

Problem (\ref{eq.ener.2}) is equivalent to the unconstrained problem:
\begin{align}
	\min\limits_{u ,q} J(u,q)+ I_S(u,q)+I_{\Sigma}(u).
	\label{eq.ener.unconstrained}
\end{align}

\section{An operator-splitting method for problem (\ref{eq.ener.unconstrained})}
\label{sec.scheme}
We propose an operator-splitting method to solve problem (\ref{eq.ener.unconstrained}). We first associate problem (\ref{eq.ener.unconstrained}) with an initial value problem. In order to update the numerical solution in time, one needs to solve nonlinear equations. We use an operator-splitting method to decompose the complicated problem into two easy-to-solve subproblems, each of which either admits a closed-form solution or can be solved efficiently.

\subsection{An associated initial-value problem}
\label{sec.optcondition}
If $(u^*,q^*)$ solves (\ref{eq.ener.unconstrained}), it satisfies the optimality condition 
\begin{align}
	\begin{cases}
		D_u J(u^*,q^*)+\partial_u I_S(u^*,q^*)+ \partial_u I_{\Sigma}(u^*)\ni 0,\\
		D_q J(u^*,q^*)+ \partial_q I_S(u^*,q^*) \ni 0,
	\end{cases}
\label{eq.opti}
\end{align}
where $D$ denotes the differentiation and $\partial$ denotes the subdifferential.

We associate (\ref{eq.opti}) with the following initial value problem in the form of a gradient flow:
\begin{align}
	\begin{cases}
		\frac{\partial u}{\partial t}+D_u J(u,q)+\partial_u I_S(u,q)+ \partial_u I_{\Sigma}(u)\ni 0,\\
		\gamma\frac{\partial q}{\partial t}+ D_q J(u,q)+ \partial_q I_S(u,q) \ni 0,\\
		u(0)=u_0,\ q(0)=q_0,
	\end{cases}
\label{eq.ivp}
\end{align}
where $(u_0,q_0)$ is an initial condition and $\gamma$ is a constant controlling the evolving speed of $q$. Then solving (\ref{eq.ener.unconstrained}) is converted to finding the steady state solution of (\ref{eq.ivp}).

\subsection{An operator splitting method for the initial-value problem}
\label{sec.osm}
Problem (\ref{eq.ivp}) is well suited to be solved by operator splitting methods. In this paper, we adopt the Lie scheme. We refer the readers to \cite{glowinski2017some,glowinski2017splitting} for a comprehensive discussion of operator-splitting methods. Let $\Delta t$ be the time step, and $t^n=n\Delta t$. We denote the numerical solution at $t^n$ by $(u^n,q^n)$. Given $(u^n,q^n)$, we update $(u^{n+1},q^{n+1})$ by two substeps:\\
\noindent Substep 1: Solve
\begin{align}
	\begin{cases}
		\begin{cases}
			\frac{\partial u}{\partial t}+D_u J(u,q)+ \partial_u I_{\Sigma}(u)\ni 0\\
			\gamma\frac{\partial q}{\partial t}+ D_q J(u,q) \ni 0\\
		\end{cases}
		\mbox{ in } \Omega\times[t^n,t^{n+1}],\\
		u(t^n)=u^n,\ q(t^n)=q^n,
	\end{cases}
	\label{eq.split.1}
\end{align}
and set 
\begin{align}
	u^{n+1/2}=u(t^{n+1}), \ q^{n+1/2}=q(t^{n+1}).
\end{align}

\noindent Substep 2: Solve
\begin{align}
	\begin{cases}
		\begin{cases}
			\frac{\partial u}{\partial t}+\partial_u I_S(u,q)\ni 0\\
			\gamma\frac{\partial q}{\partial t} +\partial_q I_S(u,q) \ni 0
		\end{cases}
		\mbox{ in } \Omega\times[t^n,t^{n+1}],\\
		u(t^n)=u^{n+1/2},\ q(t^n)=q^{n+1/2},
	\end{cases}
	\label{eq.split.2}
\end{align}
and set 
\begin{align}
	u^{n+1}=u(t^{n+1}), \ q^{n+1}=q(t^{n+1}).
\end{align}
Problem (\ref{eq.split.1}) and (\ref{eq.split.2}) are semi-constructive. One still needs to solve the subproblems. Here we use the Marchuk-Yanenko type scheme to time-discretize (\ref{eq.split.1}) and (\ref{eq.split.2}) by a backward scheme. The resulting scheme reads as
\begin{align}
	&\begin{cases}
		\frac{u^{n+1/2}-u^n}{\Delta t}+D_u J(u^{n+1/2},q^n)+ \partial_q I_{\Sigma}(u^{n+1/2})\ni 0,\\
		\gamma\frac{q^{n+1/2}-q^n}{\Delta t}+ D_q J(u^{n+1/2},q^{n+1/2}) \ni 0,
	\end{cases}
	\label{eq.splittime.1}\\
	&\begin{cases}
		\frac{u^{n+1}-u^{n+1/2}}{\Delta t}+\partial_u I_S(u^{n+1},q^{n+1}) \ni 0,\\
		\gamma\frac{q^{n+1}-q^{n+1/2}}{\Delta t} +\partial_q I_S(u^{n+1},q^{n+1}) \ni 0.
	\end{cases}
	\label{eq.splittime.2}
\end{align}

\subsection{On the solution to problem (\ref{eq.splittime.1})}
\label{sec.subproblem}
\subsubsection{Computing $u^{n+1/2}$}
In (\ref{eq.splittime.1}), $u^{n+1/2}$ solves

\begin{align}
	u^{n+1/2} =\operatorname*{arg\,min}_{v\in \Sigma} &\frac{1}{2} \int_{\Omega} |v-u^n|^2d\bx \nonumber\\
    &+ \Delta t\sqrt{\frac{ \pi}{\tau}}\int_{\Omega}\sqrt{d^{\alpha}+\eta (q^n)^{\beta}}v \left[G_{\tau}\ast \left(\sqrt{d^{\alpha}+\eta (q^n)^{\beta}}(1-v)\right) \right] d\mathbf{x}.
    \label{eq.psi0.1}
\end{align}
Since $v$ is a binary function, we have \( |v - u^n|^2 = v^2 - 2 u^n v + (u^n)^2 = v - 2 u^n v + (u^n)^2 \), and \( v - 2 u^n v \) can be approximated by \( v G_{\tau} \ast (1 - 2 u^n) \) following \cite{Miranda2007}. We approximate problem (\ref{eq.psi0.1}) by
\begin{align}
    \min_{v\in \Sigma} \phi(v)
    \label{eq.v.sigma}
\end{align}
with
\begin{align}
    \phi(v) = &\frac{1}{2} \int_{\Omega} vG_{\tau^{}}\ast(1-2u^n)d\bx \nonumber\\
    &+ \Delta t\sqrt{\frac{ \pi}{\tau}}\int_{\Omega}\sqrt{d^{\alpha}+\eta (q^n)^{\beta}}v\left[G_{\tau}\ast \left(\sqrt{d^{\alpha}+\eta (q^n)^{\beta}}(1-v) \right) \right] d\mathbf{x}
	\label{eq.u1}.
\end{align}
Define 
$$
\widetilde{\Sigma}: = \{u \in BV(\Omega,\mathbb{R})|u \in [0,1]\}, \quad I_{\widetilde{\Sigma}}(u)= \begin{cases}
		0 & \mbox{ if } u\in \widetilde{\Sigma},\\
		+\infty & \mbox{ if } u\notin \widetilde{\Sigma}.
	\end{cases}
$$
We have the following theorem:
\begin{theorem}\label{thm.phi.sigma}
    Problem (\ref{eq.v.sigma}) and the following one
    \begin{align}
        \min_{v\in \widetilde{\Sigma}} \phi(v)
        \label{eq.v.sigmat}
    \end{align}
    have the same solution.
\end{theorem}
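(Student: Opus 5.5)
The plan is the standard threshold-dynamics concavity argument: show that $\phi$ is concave on the convex set $\widetilde{\Sigma}$. A concave functional on a convex set attains its minimum at extreme points, and the extreme points of $\widetilde{\Sigma}$ are the binary functions in $\Sigma$. Hence the relaxed problem (\ref{eq.v.sigmat}) has a minimizer in $\Sigma$. Since $\Sigma\subset\widetilde{\Sigma}$, that minimizer also solves (\ref{eq.v.sigma}), and the two minimal values coincide.

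\textbf{Structure of $\phi$.} Write $w:=\sqrt{d^{\alpha}+\eta (q^n)^{\beta}}\ge 0$, which is fixed because $q^n$ is frozen. The first term of $\phi$ is linear in $v$. The second term splits as
\begin{align*}
\Delta t\sqrt{\tfrac{\pi}{\tau}}\int_\Omega w v\, [G_\tau\ast w]\,d\bx \;-\; \Delta t\sqrt{\tfrac{\pi}{\tau}}\int_\Omega w v\,[G_\tau\ast (w v)]\,d\bx .
\end{align*}
The first part is linear in $v$. The second part is $-\Delta t\sqrt{\pi/\tau}\,\langle wv, G_\tau\ast(wv)\rangle$.

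\textbf{Concavity.} The Gaussian $G_\tau$ has strictly positive Fourier transform $e^{-4\pi^2\tau|\xi|^2}$ (or $e^{-\tau|\xi|^2}$ depending on convention). By Plancherel, $\langle f, G_\tau\ast f\rangle=\int |\hat f|^2\hat G_\tau\ge 0$ for every $f\in L^2$. To handle convolution on a bounded $\Omega$, extend $f$ by zero. Therefore $v\mapsto \langle wv,G_\tau\ast(wv)\rangle$ is a positive semidefinite quadratic form composed with the linear map $v\mapsto wv$, so it is convex. With the minus sign, $\phi$ is linear plus concave, hence concave on $\widetilde{\Sigma}$.

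\textbf{Reduction to binary functions.} This is the main obstacle. Concavity alone gives an extreme-point minimizer only once existence and the extreme-point structure are justified in this infinite-dimensional set. I would avoid abstract Krein--Milman arguments and use a direct construction instead.

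Step 1. Note that $\phi(v)-\phi(u)\ge \langle \delta\phi(u), v-u\rangle$ fails for concave $\phi$; the inequality reverses, giving $\phi(v)\le \phi(u)+\langle\delta\phi(u),v-u\rangle$.

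Step 2. Take any $u\in\widetilde{\Sigma}$. Let $g=\delta\phi(u)$ be its first variation, an explicit continuous function. Define $\tilde u=\mathbf 1_{\{g<0\}}\in\Sigma$. This $\tilde u$ minimizes the linear functional $\langle g,\cdot\rangle$ over $\widetilde{\Sigma}$ pointwise. Thus $\phi(\tilde u)\le\phi(u)+\langle g,\tilde u-u\rangle\le\phi(u)$.

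Step 3. The map $u\mapsto\tilde u$ sends each element of $\widetilde{\Sigma}$ to a binary function with no larger energy. Consequently $\inf_{\widetilde{\Sigma}}\phi=\inf_{\Sigma}\phi$.

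Step 4. Any minimizer $u^*$ of (\ref{eq.v.sigma}) is therefore a minimizer of (\ref{eq.v.sigmat}). Conversely, any minimizer of (\ref{eq.v.sigmat}) yields, via thresholding, a binary minimizer of (\ref{eq.v.sigma}).

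A caveat remains. If the concavity is only non-strict, a relaxed minimizer need not itself be binary. "Same solution" should then be read as equal optimal values together with a binary minimizer existing for both problems. Strict concavity holds when $w>0$ a.e., because the quadratic form is positive definite; in that case any relaxed minimizer must be binary, and I would remark on this.

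Finally, membership in BV is preserved by thresholding provided $\{g<0\}$ has finite perimeter. I would treat this as a technical point and assume it, or argue from the smoothness of $g$, which is a convolution with a Gaussian plus smooth data.
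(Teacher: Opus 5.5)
Your proposal is correct, but it takes a different route from the paper. The paper argues purely locally. It assumes a minimizer $v^*\in\widetilde{\Sigma}\setminus\Sigma$, picks $\omega$ with $v^*\in(c,1-c)$ on $\omega$, and computes
\[
\frac{d^2}{ds^2}\phi(v^*+s\chi_\omega)=-2\Delta t\sqrt{\pi/\tau}\int_\Omega P\chi_\omega\,G_\tau\ast(P\chi_\omega)\,d\bx<0 .
\]
This contradicts second-order optimality, so every relaxed minimizer is binary and the argmin sets coincide. Because the direction $P\chi_\omega$ is nonnegative, the paper needs only pointwise positivity of $G_\tau$.

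You instead prove global concavity on $\widetilde{\Sigma}$. That needs the stronger positive-definiteness of $G_\tau$ (Plancherel), since your direction $\tilde u-u$ changes sign, but it gives you more. Because $\phi$ is quadratic, Step 2 is the exact identity
\[
\phi(\tilde u)=\phi(u)+\langle g,\tilde u-u\rangle-\Delta t\sqrt{\pi/\tau}\,\langle w(\tilde u-u),G_\tau\ast(w(\tilde u-u))\rangle\le\phi(u).
\]
Granted $\tilde u\in BV$, this yields equality of the infima without presupposing that a relaxed minimizer exists. It also gives an explicit energy-nonincreasing map into binary functions. As a by-product, it proves the monotonicity $\phi(w^{k+1})\le\phi(w^k)$ of the thresholding iteration (\ref{algorithm1}), which the paper never establishes.

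Two points need tightening. First, the full claim (identical solution sets) rests in your write-up on the closing strict-concavity remark. That remark, applied along $u^*\pm s\chi_\omega$, is precisely the paper's argument. You should state it unconditionally: $\mathcal{C}$ is finite, so $d>0$ a.e., hence $d^{\alpha}>0$ and $w>0$ a.e. The paper needs this fact too, since its stated reasons ($P\ge 0$ and $G_\tau\ge 0$) only give $\le 0$.

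Second, your justification that $\{g<0\}$ has finite perimeter because $g$ is smooth does not hold up. The function $g$ carries the factor $w$, which is built from the Lipschitz $d$ and the merely BV $q^n$, so $g$ is not smooth. The strict-concavity route avoids thresholding altogether. There, $c$ can be chosen via the coarea formula so that $\omega$ has finite perimeter.
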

\begin{proof}[Proof of Theorem \ref{thm.phi.sigma}]
    By $\Sigma \subset \widetilde{\Sigma}$, we can verify that:
     \begin{align}
		 \min_{v\in \widetilde{\Sigma}}\phi(v) \leq  \min_{v \in \Sigma}\phi(v).
     \end{align}
     Therefore, it suffices to show that: 
     \begin{align}
		\operatorname*{arg\,min}_{v \in \widetilde{\Sigma}} \phi(v) \in \Sigma.      
        \label{thm.phi1.1}
     \end{align}
     Assume the local minimizer of $\phi(v)$ is $v^{\ast} \in  \widetilde{\Sigma}$ and $v^{\ast} \notin \Sigma$, then there exists a set $\omega \subset \Omega$ with nonzero measure and $c > 0$ such that:
     \begin{align}
		v^{\ast}(\bx) \in (c, 1-c), \ \ \ \ \ \forall \bx \in \omega.
        \label{lemma1.2}
     \end{align}
    Let \( v^{s} = v^{\ast} + s \chi_{\omega} \), where \( \chi_{\omega} \) is the indicator function of \( \omega \). We have \( v^{s} \in \widetilde{\Sigma} \) for any \( |s| < c \). The second-order derivative of \( \phi(v^{s}) \) with respect to the variable \( s \) is: 
     \begin{align}
		\frac{\mathrm{\partial}^2 \phi(v^s)}{\mathrm{\partial} s^2} = -2 \Delta t\sqrt{\frac{\pi}{\tau}} \int_{\Omega} \sqrt{d^{\alpha}+\eta (q^{n})^{\beta}} {\chi_{\omega} G_{\tau} \ast \left(\chi_{\omega}\sqrt{d^{\alpha}+\eta (q^{n})^{\beta}}\right)} \, \mathrm{d}x.
     \end{align}
      Since $\sqrt{d^{\alpha}+\eta (q^{n})^{\beta}} \geq 0$, $\Delta t > 0$ and the heat kernel $G_{\tau}\geq 0$, it follows that 
      \[
      \frac{\mathrm{\partial}^2 \phi(v^s)}{\mathrm{\partial}s^2} < 0\] for all $|s| < c$, and in particular at $s=0$ where $v^s = v^*$. This contradicts the second-order necessary condition for $v^{\ast}$ to be a local minimizer. Therefore, any local minimizer of $\phi(v)$ over $\widetilde{\Sigma}$ must lie in $\Sigma$. Hence,
\begin{align}
\operatorname*{arg\,min}_{v \in \widetilde{\Sigma}} \phi(v) \subset \Sigma.
\end{align}
On the other hand, since $\Sigma \subset \widetilde{\Sigma}$, we immediately obtain
\begin{align}
\operatorname*{arg\,min}_{v \in \widetilde{\Sigma}} \phi(v)
=
\operatorname*{arg\,min}_{v \in \Sigma} \phi(v).
\end{align}
Thus, the relaxed problem and the original binary-valued problem have the same minimizers. 
\end{proof}

By Theorem \ref{thm.phi.sigma}, it  is sufficient to consider the relaxed problem (\ref{eq.v.sigmat}) to solve (\ref{eq.v.sigma}).
We use an iterative scheme to repeatedly solve the linearization of (\ref{eq.v.sigmat}). The following theorem gives the variation of $\phi(v)$ with respect to $v$:

\begin{theorem}
The variation of $\phi(v)$ with respect to $v$ is:   
\begin{align}
\frac{\partial\phi(v)}{\partial v}= G_{\tau} \ast \left(\frac{1}{2} - u^n\right) + \Delta t \sqrt{\frac{\pi}{\tau}} \sqrt{d^{\alpha} + \eta (q^n)^{\beta}} \left[ G_{\tau} \ast \left( \sqrt{d^{\alpha} + \eta (q^n)^{\beta}} (1 - 2 v) \right)\right].
\label{eq.psi.1}
\end{align}
\label{Theorem 4.1}
\end{theorem}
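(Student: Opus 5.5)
We compute the first variation (Gâteaux derivative) of $\phi$ at $v$ in an arbitrary direction $w$. We then identify the $L^2(\Omega)$ representative of that linear functional, using the symmetry of the Gaussian kernel. To shorten the notation, set $g:=\sqrt{d^{\alpha}+\eta (q^n)^{\beta}}$. This is a fixed nonnegative function independent of $v$, since $q^n$ is frozen in (\ref{eq.u1}). We then write
\begin{align*}
\phi(v)=\frac12\int_\Omega v\,G_\tau\ast(1-2u^n)\,d\bx+\Delta t\sqrt{\frac{\pi}{\tau}}\int_\Omega g v\,\bigl[G_\tau\ast\bigl(g(1-v)\bigr)\bigr]\,d\bx .
\end{align*}
We compute $\frac{d}{d\epsilon}\phi(v+\epsilon w)\big|_{\epsilon=0}$ term by term.

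The first term is linear in $v$. Its derivative in the direction $w$ is $\int_\Omega w\,G_\tau\ast(\tfrac12-u^n)\,d\bx$, which already gives the first summand of (\ref{eq.psi.1}). For the second term, we expand $(v+\epsilon w)\bigl[G_\tau\ast(g(1-v-\epsilon w))\bigr]$ and keep the $O(\epsilon)$ part. This yields
\begin{align*}
\Delta t\sqrt{\frac{\pi}{\tau}}\left(\int_\Omega g w\,\bigl[G_\tau\ast(g(1-v))\bigr]\,d\bx-\int_\Omega g v\,\bigl[G_\tau\ast(g w)\bigr]\,d\bx\right).
\end{align*}

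The key step is to move the convolution off $w$ in the second integral. Since $G_\tau$ is even, $G_\tau(\bx)=G_\tau(-\bx)$, and Fubini gives $\int f\,(G_\tau\ast h)=\int h\,(G_\tau\ast f)$. The second integral therefore equals $\int_\Omega g w\,[G_\tau\ast(gv)]\,d\bx$. Combining it with the first integral produces $\int_\Omega w\, g\,[G_\tau\ast(g(1-v)-gv)]\,d\bx=\int_\Omega w\,g\,[G_\tau\ast(g(1-2v))]\,d\bx$. Adding the contribution of the first term and reading off the integrand multiplying $w$ gives exactly (\ref{eq.psi.1}).

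The main technical point is justifying the symmetry/Fubini identity on a bounded domain $\Omega$. This requires interpreting the convolution consistently, for instance by extending functions by zero (or periodically, as in the numerics) so that $G_\tau\ast$ acts as a self-adjoint operator on $L^2(\Omega)$. It also requires integrability, which holds because $v,u^n\in[0,1]$, $g$ is bounded on the bounded domain (given bounded $q^n$), and $G_\tau\in L^1$. Once self-adjointness is in place, the rest is a direct computation.
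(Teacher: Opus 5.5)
Your proposal is correct and follows essentially the same route as the paper: split $\phi$ into the linear term and the interaction term, perturb $v$ by $\epsilon h$, keep the first-order part, then use the evenness of $G_\tau$ with Fubini to move the convolution off the perturbation, so the two pieces combine into $g\,[G_\tau\ast(g(1-2v))]$. Your remark that the convolution on a bounded $\Omega$ must be read via a zero or periodic extension, so that $G_\tau\ast$ is self-adjoint, makes explicit a point the paper uses without stating it.
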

\begin{proof}[Proof of Theorem \ref{Theorem 4.1}]
Denote $P = \sqrt{d^{\alpha}+\eta (q^n)^{\beta}}$. The functional in (\ref{eq.u1}) can be written as:
\begin{align}
      \frac{1}{2} \int_{\Omega} vG_{\tau^{}}\ast(1-2u^n)d\bx + \Delta t \sqrt{\frac{ \pi}{\tau}}\int_{\Omega}PvG_{\tau}\ast [P(1-v)] d\mathbf{x}
     \label{M1}.
\end{align}
We decompose (\ref{M1}) into:
\begin{align*}
    I_1(v) &= \frac{1}{2} \int_{\Omega} vG_{\tau^{}}\ast(1-2u^n)d\bx,
\\
    I_2(v) &= \Delta t \sqrt{\frac{ \pi}{\tau}}\int_{\Omega}PvG_{\tau}\ast [P(1-v) ]d\mathbf{x}.
\end{align*}
The variation of $I_1(v) $ is
\begin{align}
    \frac{\partial I_1(v)}{\partial v} = \ G_{\tau^{}}\ast(\frac{1}{2} - u^n).
    \label{eq.psi.1.0}
\end{align}

Now consider $I_2(v)$. For any perturbation direction $h\in \widetilde{\Sigma}$ and scalar $\epsilon >0$, introducing perturbation $\epsilon h(\bx)$ to $v(\bx)$ gives rise to
\begin{align}
    I_2(v+\epsilon h) = \Delta t \sqrt{\frac{ \pi}{\tau}}\int_{\Omega}P(\bx)[v(\bx)+\epsilon h(\bx)]\int_{\Omega}G_{\tau}(\bx-\by) P(\by)[1-v(\by)-\epsilon h(\by)] d\mathbf{y}d\mathbf{x}.
\end{align}
Denote 
\begin{align*}
    I_{21} &= \Delta t \sqrt{\frac{ \pi}{\tau}}\int_{\Omega}P(\bx)v(\bx)\int_{\Omega}G_{\tau}(\bx-\by) P(\by)[1-v(\by)-\epsilon h(\by)] d\mathbf{y}d\mathbf{x},
\\
    I_{22} &= \Delta t \sqrt{\frac{ \pi}{\tau}}\int_{\Omega}P(\bx)\epsilon h(\bx)\int_{\Omega}G_{\tau}(\bx-\by) P(\by)[1-v(\by)-\epsilon h(\by)] d\mathbf{y}d\mathbf{x}.
\end{align*}
The difference $I_2(v+\epsilon h)-I_2(v)$ can be rewritten as:
\begin{align}
    I_2(v+\epsilon h)-I_2(v) = I_{21}+I_{22}-I_2(v) = I_{21}-I_2(v) +I_{22}
    \label{difference}.
\end{align}
For $I_{21}-I_2(v)$, we have
\begin{align}
    I_{21}-I_2(v) = \Delta t \sqrt{\frac{ \pi}{\tau}}\int_{\Omega}P(\bx)v(\bx)\int_{\Omega}G_{\tau}(\bx-\by) P(\by)(-\epsilon h(\by)) d\mathbf{y}d\mathbf{x}\nonumber
   &\\ = \Delta t \sqrt{\frac{ \pi}{\tau}}\int_{\Omega}\int_{\Omega}P(\by)\epsilon h(\by)G_{\tau}(\by-\bx) P(\bx)(-v(\bx)) d\mathbf{y}d\mathbf{x} \nonumber
   &\\ = \Delta t \sqrt{\frac{ \pi}{\tau}}\int_{\Omega}P(\by)\epsilon h(\by)\int_{\Omega}G_{\tau}(\bx-\by) P(\bx)(-v(\bx)) d\mathbf{y}d\mathbf{x}. 
\label{eq.deravative3.1}
\end{align}
Utilizing the property \(G_{\tau}(\by - \bx) = G_{\tau}(\bx - \by)\) and substituting (\ref{eq.deravative3.1}) into (\ref{difference}) gives rise to
\begin{align}
I_{21}+I_{22}-I_2(v) =& \Delta t \sqrt{\frac{ \pi}{\tau}}\int_{\Omega}P(\bx)\epsilon h(\bx)\int_{\Omega}G_{\tau}(\bx-\by) P(\by)[1-2v(\by)-\varepsilon h(\by)] d\mathbf{y}d\mathbf{x} \nonumber\\
=&D_1 - D_2
\label{eq.deravative3}
\end{align}
with
\begin{align*}
   &D_1=\Delta t \epsilon \sqrt{\frac{ \pi}{\tau}}\int_{\Omega}P(\bx) h(\bx)\int_{\Omega}G_{\tau}(\bx-\by) P(\by)[1-2v(\by)] d\mathbf{y}d\mathbf{x},
   \\
      &D_2 = \Delta t \epsilon^2 \sqrt{\frac{ \pi}{\tau}} \int_{\Omega}P(\bx)h(\bx)\int_{\Omega}G_{\tau}(\bx-\by) P(\by)h(\by) d\mathbf{y}d\mathbf{x}.   
\end{align*}
The functional variation in the direction of $h(\bx)$ is 
\begin{align*}
    \int_{\Omega}\frac{ \partial I_2}{\partial v} h(\bx)dx &=\lim_{\epsilon \xrightarrow{}0} \frac{I_2(v+\epsilon h(\bx))-I_2(v)}{\epsilon} \\
    &= \lim_{\epsilon \xrightarrow{}0}\frac{D_1}{\epsilon} - \lim_{\epsilon \xrightarrow{}0}\frac{D_2}{\epsilon}
    \\& = \Delta t \sqrt{\frac{ \pi}{\tau}}\int_{\Omega}P(\bx)h(\bx)\int_{\Omega}G_{\tau}(\bx-\by) P(\by)[1-2v(\by)] d\by d\bx.
\end{align*}
We thus have
\begin{align}
    \frac{ \partial I_2}{\partial v} = \Delta t \sqrt{\frac{ \pi}{\tau}} PG_{\tau} \ast [ P(1-2v)].
    \label{eq.psi2.3}
\end{align}
Putting (\ref{eq.psi.1.0}) and (\ref{eq.psi2.3}) together finishes the proof.
\end{proof}

Denote $\psi(v)=\frac{\partial\phi(v)}{\partial v}$. By Theorem \ref{Theorem 4.1}, the linearization of functional $\phi$ at v is
\begin{align}
L^{\tau}(w,v) = \int_{\Omega} w \, \psi(v)\, d\mathbf{x}
\label{eq.linearization.2}
\end{align}
Our iterative scheme repeatedly solves (\ref{eq.linearization.2}) to compute $u^{n+1/2}$. Specifically, set $w^0=u^n$. We update:
\begin{align}
w^{k+1}(\mathbf{x}) = \operatorname*{arg\,min}_{w \in \Sigma} L^{\tau}(w,w^k).
\label{eq.updatew}
\end{align}
Denote $\psi^k=\psi(w^k)$. Problem (\ref{eq.updatew}) can be solved pointwisely:
\begin{align*}
w^{k+1}(\mathbf{x}) &= \operatorname*{arg\,min}_{w \in \Sigma} w(\mathbf{x}) \psi^k,
\end{align*}
which can be exactly solved via a thresholding step
\begin{align}
	w^{k+1}(\mathbf{x})=\begin{cases}	
		1 & \mbox{ if } \psi^k\leq 0,\\
		0 & \mbox{ otherwise}.	
	\end{cases}
    \label{algorithm1}
\end{align}
The iteration is terminated when $\max\{w^{k+1} - w^{k}\} = 0$. Denote the converged function by $w^*$, we set $u^{n+1/2}=w^*$. 

Since scheme (\ref{eq.splittime.1})-(\ref{eq.splittime.2}) is an iterative scheme, we do not need to solve (\ref{eq.u1}) exactly. We can only use a few iterations for $w$.

\subsubsection{Computing $q^{n+1/2}$}
\label{section4.2}
In (\ref{eq.splittime.1}), $q^{n+1/2}$ solves
\begin{align}
	q^{n+1/2} = \operatorname*{arg\,min}_{p \in \mathcal{K}} \frac{\gamma}{2} &\int_{\Omega} |p-q^n|^2d\bx \nonumber\\
    &+ \Delta t \sqrt{\frac{ \pi}{\tau}}\int_{\Omega}\sqrt{d^{\alpha}+\eta p^{\beta}}u^{n+1/2}\left[G_{\tau}\ast \left( \sqrt{d^{\alpha}+\eta p^{\beta}}(1-u^{n+1/2}) \right) \right] d\mathbf{x}.
    \label{eq.g1}
\end{align}
The following theorem gives an optimality condition for $q^{n+1/2}$:
\begin{theorem}
For the optimization problem (\ref{eq.g1}), if \( p^{\ast} \) is a minimizer, it satisfies
\begin{align}
	\gamma p^{\ast} - \gamma q^n &+\frac{\beta\eta (p^{\ast})^{\beta-1}}{2\sqrt{d^{\alpha}+\eta (p^{\ast})^{\beta}}}\Delta t\sqrt{\frac{ \pi}{\tau}}u^{n+1/2}\left[G_{\tau}\ast \left(\sqrt{d^{\alpha}+\eta (p^{\ast})^{\beta}}(1-u^{n+1/2})\right)\right]\nonumber\\
    &+\frac{\beta\eta (p^{\ast})^{\beta-1}}{2\sqrt{d^{\alpha}+\eta (p^{\ast})^{\beta}}}\Delta t\sqrt{\frac{ \pi}{\tau}}(1-u^{n+1/2})\left[G_{\tau}\ast \left(\sqrt{d^{\alpha}+\eta (p^{\ast})^{\beta}}u^{n+1/2}\right)\right]  = 0.
    \label{eq.q1.01}
\end{align}
\label{theorem2}
\end{theorem}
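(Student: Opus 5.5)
The plan is to derive (\ref{eq.q1.01}) as the Euler--Lagrange equation of (\ref{eq.g1}), computing the first variation the same way as in the proof of Theorem \ref{Theorem 4.1}.

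Write $Q(p)=\sqrt{d^{\alpha}+\eta p^{\beta}}$ and $u=u^{n+1/2}$. Split the objective into a quadratic part and a coupling part:
\begin{align*}
F_1(p)&=\frac{\gamma}{2}\int_\Omega |p-q^n|^2\,d\bx,\\
F_2(p)&=\Delta t\sqrt{\frac{\pi}{\tau}}\int_\Omega Q(p)\,u\,G_\tau\ast\big(Q(p)(1-u)\big)\,d\bx .
\end{align*}
Since $p^{\ast}$ is a minimizer, for every smooth perturbation $h$ the map $\epsilon\mapsto F_1(p^{\ast}+\epsilon h)+F_2(p^{\ast}+\epsilon h)$ attains a minimum at $\epsilon=0$. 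Its derivative at $\epsilon=0$ must therefore vanish.

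\emph{The quadratic part.} The first term contributes $\int_\Omega \gamma(p^{\ast}-q^n)h\,d\bx$. This gives the terms $\gamma p^{\ast}-\gamma q^n$ in (\ref{eq.q1.01}).

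\emph{The coupling part.} I use the pointwise chain rule
\[
\frac{d}{d\epsilon}Q(p^{\ast}+\epsilon h)\Big|_{\epsilon=0}=\frac{\beta\eta (p^{\ast})^{\beta-1}}{2Q(p^{\ast})}\,h .
\]
By the product rule, the derivative of $F_2$ then splits into two integrals.
\begin{itemize}
\item In the first, the outer factor $Q$ is varied. This yields directly
\[
\int_\Omega \frac{\beta\eta (p^{\ast})^{\beta-1}}{2Q(p^{\ast})}\,h\,u\,G_\tau\ast\big(Q(p^{\ast})(1-u)\big)\,d\bx .
\]
\item In the second, the $Q$ inside the convolution is varied. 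This gives
\[
\int_\Omega Q(p^{\ast})\,u\,G_\tau\ast\Big(\frac{\beta\eta (p^{\ast})^{\beta-1}}{2Q(p^{\ast})}\,h\,(1-u)\Big)\,d\bx .
\]
I then write the convolution as a double integral and swap the roles of $\bx$ and $\by$ by Fubini. The symmetry $G_\tau(\bx-\by)=G_\tau(\by-\bx)$ moves the convolution onto the other factor, exactly as in (\ref{eq.deravative3.1}). The result is $\int_\Omega \frac{\beta\eta (p^{\ast})^{\beta-1}}{2Q(p^{\ast})}\,h\,(1-u)\,G_\tau\ast\big(Q(p^{\ast})u\big)\,d\bx$.
\end{itemize}

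\emph{Conclusion.} Summing the three contributions gives $\int_\Omega R(p^{\ast})\,h\,d\bx=0$ for all $h$, where $R$ is the left-hand side of (\ref{eq.q1.01}). The fundamental lemma of the calculus of variations then gives $R(p^{\ast})=0$ almost everywhere, which is (\ref{eq.q1.01}).

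\emph{Main obstacle.} The delicate step is justifying differentiation under the integral sign. I will use that $d$ is bounded on $\Omega$, $u\in[0,1]$, and $G_\tau\in L^1$, and take $h$ bounded so that dominated convergence applies.

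A genuine hazard is that $Q(p)$ may vanish where $d=0$ and $p=0$. Since $\beta$ is even, the derivative there is $\beta\eta p^{\beta-1}/(2Q)$, which behaves like $|p|^{\beta/2-1}$ and stays bounded when $\beta\ge2$. I will note this, or interpret (\ref{eq.q1.01}) on the set where $Q(p^{\ast})>0$.

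Finally, I will treat minimization over $\mathcal K$ as unconstrained, so that two-sided perturbations are admissible.
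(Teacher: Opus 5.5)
Your proposal is correct and follows essentially the same route as the paper: you split off the quadratic part and differentiate the coupling term by the chain rule for $\sqrt{d^{\alpha}+\eta p^{\beta}}$ and the product rule (the paper does this by adding and subtracting a cross term in the difference quotient). You then use Fubini with the symmetry $G_\tau(\bx-\by)=G_\tau(\by-\bx)$, as in (\ref{eq.deravative3.1}), to move the convolution onto the other factor. Your version is slightly more careful than the paper's: you keep $h$ inside the convolution before swapping variables, whereas the paper's (\ref{gradient4}) drops it, and you flag the degeneracy at $d=0$, $p^{\ast}=0$, where for $\beta=2$ the integrand is only Lipschitz in $p$, so your fallback of working on the full-measure set $\{d>0\}$ is the right fix.
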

\begin{proof}[Proof of Theorem \ref{theorem2}]
    Denote
    \begin{align*}
      S(p) = \sqrt{d^{\alpha}+\eta p^{\beta}}.
    \end{align*}
    We decompose the functional in (\ref{eq.g1}) into:
    \begin{align*}
    &Q_1(p) = \frac{\gamma}{2} \int_{\Omega} |p-q^n|^2\bx,
    \\
    &Q_2(p) = \Delta t \sqrt{\frac{ \pi}{\tau}}\int_{\Omega}S(p)u^{n+1/2}\left[G_{\tau}\ast \left( S(p)(1-u^{n+1/2}) \right) \right] d\mathbf{x}.
    \end{align*}
    It is straightforward that the differential of $Q_1(p)$ is:
    \begin{align}
    D_pQ_1(p) = \gamma p - \gamma q^n.
    \label{eq.Q1grad}
    \end{align}
    By introducing a small perturbation $\epsilon h $ to  $Q_2$, we obtain that
    \begin{align}
    &Q_2(p+\epsilon h) - Q_2(p)\nonumber\\
    =&\Delta t \sqrt{\frac{ \pi}{\tau}}\int_{\Omega}S(p+\epsilon h)u^{n+1/2}\left[G_{\tau}\ast \left( S(p+\epsilon h)(1-u^{n+1/2}) \right) \right] d\mathbf{x} \nonumber
   \\ &-\Delta t \sqrt{\frac{ \pi}{\tau}}\int_{\Omega}S(p)u^{n+1/2}\left[G_{\tau}\ast \left( S(p)(1-u^{n+1/2}) \right) \right] d\mathbf{x}.\nonumber
\\  =& \Delta t \sqrt{\frac{\pi}{\tau}} \int_{\Omega} S(p+\epsilon h) u^{n+1/2} \left[ G_\tau \ast \left( S(p+\epsilon h)(1 - u^{n+1/2}) \right) \right] d\mathbf{x}  \nonumber
\\& - \Delta t \sqrt{\frac{\pi}{\tau}} \int_{\Omega} S(p) u^{n+1/2} \left[ G_\tau \ast \ \left(S(p+\epsilon h)(1 - u^{n+1/2}) \right) \right] d\mathbf{x}
 \nonumber 
\\\nonumber&+  \Delta t \sqrt{\frac{\pi}{\tau}} \int_{\Omega} S(p) u^{n+1/2} \left[ G_\tau \ast \left(  S(p+\epsilon h)(1 - u^{n+1/2}) \right) \right] d\mathbf{x}
\\\nonumber& - \Delta t \sqrt{\frac{\pi}{\tau}} \int_{\Omega} S(p) u^{n+1/2} \left[ G_\tau \ast \left( S(p)(1 - u^{n+1/2}) \right) \right] d\mathbf{x}.
\\ =&\Delta t \sqrt{\frac{\pi}{\tau}} \int_{\Omega} [ S(p+\epsilon h) - S(p) ]u^{n+1/2}  \left[ G_\tau \ast \left( S(p+\epsilon h)(1 - u^{n+1/2}) \right)\right] d\mathbf{x}  \nonumber
 \\& + \Delta t \sqrt{\frac{\pi}{\tau}} \int_{\Omega} S(p) u^{n+1/2} \left[ G_\tau \ast \left( [S(p+\epsilon h) - S(p) ](1 - u^{n+1/2}) \right)\right] d\mathbf{x}.
\label{gradient2}
\end{align}
We thus have
\begin{align}
   \nonumber &\int_{\Omega}D_pQ_2(p) h d\mathbf{x}\\
   =&\lim_{\epsilon \to 0}\frac{Q_2(p+\epsilon h) - Q_2(p)}{\epsilon} \nonumber\\ 
   \nonumber= &\lim_{\epsilon \to 0} \Delta t \sqrt{\frac{\pi}{\tau}}\left\{ \int_{\Omega} \frac{ S(p+\epsilon h) - S(p)}{\epsilon} u^{n+1/2}  \left[ G_\tau \ast \left( S(p+\epsilon h)(1 - u^{n+1/2}) \right)\right] d\mathbf{x}\right\}\\
   &+\lim_{\epsilon \to 0} \Delta t \sqrt{\frac{\pi}{\tau}}\left\{\int_{\Omega} S(p) u^{n+1/2} \left[ G_\tau \ast \left(\frac{ S(p+\epsilon h) - S(p)}{\epsilon}(1 - u^{n+1/2}) \right) \right] d\mathbf{x}
   \right\}.
   \label{gradient3}
\end{align}
Note that the differential of function $S(p)$ is:
\begin{align}
    S'(p) = \lim_{\epsilon \to 0} \frac{ S(p+\epsilon h) - S(p)}{\epsilon} = \frac{\beta\eta p^{\beta-1}}{2\sqrt{d^{\alpha}+\eta p^{\beta}}}.
    \label{eq.Sp}
\end{align}
Combining (\ref{gradient3}) and (\ref{eq.Sp}) gives rise to
\begin{align}
    \nonumber  D_pQ_2(p) =& \Delta t \sqrt{\frac{\pi}{\tau}}   S'(p) u^{n+1/2}  \left[ G_\tau \ast \left( S(p)(1 - u^{n+1/2}) \right)\right] 
   \\&+ \Delta t \sqrt{\frac{\pi}{\tau}} S(p) u^{n+1/2} \left[ G_\tau \ast \left( S'(p)(1 - u^{n+1/2}) \right) \right] 
   .
   \label{gradient4}
\end{align}
Similar to the derivation of (\ref{eq.deravative3}), we can rewrite (\ref{gradient4}) as:
\begin{align}
    \nonumber D_pQ_2(p) =  &\Delta t \sqrt{\frac{\pi}{\tau}}  S'(p) u^{n+1/2}  \left[ G_\tau \ast \left( S(p)(1 - u^{n+1/2}) \right)\right] 
   \\&+ \Delta t \sqrt{\frac{\pi}{\tau}} S'(p)(1 - u^{n+1/2})  \left[ G_\tau \ast \left(S(p) u^{n+1/2} \right) \right] .
   \label{gradient5}
\end{align}
Putting (\ref{eq.Q1grad}) and (\ref{gradient5}) together finishes the proof.
\end{proof}
Based on Theorem \ref{theorem2}, we use a fixed-point method to iteratively solve (\ref{eq.q1.01}). We define $p^0 =  q^n$. At the $k$-th iteration, we compute the $p^{k+1}$ by:
\begin{align}
	 p^{k+1} =  & q^n -\frac{\beta\eta (p^{k})^{\beta-1}}{2\gamma\sqrt{d^{\alpha}+\eta (p^{k})^{\beta}}+c}\Delta t\sqrt{\frac{ \pi}{\tau}}u^{n+1/2}\left[G_{\tau}\ast \left(\sqrt{d^{\alpha}+\eta (p^{k})^{\beta}}(1-u^{n+1/2})\right)\right]\nonumber\\
    &-\frac{\beta\eta (p^{k})^{\beta-1}}{2\gamma\sqrt{d^{\alpha}+\eta (p^{k})^{\beta}}+c}\Delta t\sqrt{\frac{ \pi}{\tau}}(1-u^{n+1/2})\left[G_{\tau}\ast \left(\sqrt{d^{\alpha}+\eta (p^{k})^{\beta}}u^{n+1/2}\right)\right],
    \label{eq.q1.05}
\end{align}
where $c$ is a positive constant used to prevent division by zero. We update $p^k$ until $\|p^{k+1} - p^{k} \| \leq \epsilon$ or \(k \geq k_{\max}\), where $k_{\max}$ is the maximum number of iterations of the fixed-point method. Denote the converged quantity by $p^*$. We set
\begin{align}
   q^{n+1/2} = p^*.
\end{align}

\subsection{On the solution to (\ref{eq.splittime.2})}
\label{section4.3}
In (\ref{eq.splittime.2}), $(u^{n+1},q^{n+1})$ solves
\begin{align*}
	\min_{(v,p)\in S} \frac{1}{2} \int_{\Omega} |v-u^{n+1/2}|^2d\bx + \frac{\gamma}{2} \int_{\Omega} |p-q^{n+1/2}|^2d\bx.
\end{align*}
Since $(u^{n+1},q^{n+1})\in S$, we have $q^{n+1}= \kappa(u^{n+1})$. Equivalently, one can solve
\begin{align}
	\min_{v} \frac{1}{2} \int_{\Omega} |v-u^{n+1/2}|^2d\bx + \frac{\gamma}{2} \int_{\Omega} \left|\kappa(v)-q^{n+1/2}\right|^2d\bx.
	\label{eq.v2.indi}
\end{align}
Note that the surface is represented as the 0.5 level set of $u$. Although $u$ is discontinuous, after discretization, the curvature can still be computed by
\begin{align}
    \kappa(u)=\nabla\cdot \frac{\nabla u}{|\nabla u|}.
\end{align}
Thus, solving (\ref{eq.v2.indi}) requires the computation of fourth-order derivatives of $v$, which is computationally expensive. When $\Delta t$ is small, we expect $(u^{n+1},q^{n+1})$ to stay close to $(u^{n},q^{n})$. We thus approximately solve (\ref{eq.v2.indi}) by
\begin{align}
	\begin{cases}
		u^{n+1}=u^{n+1/2},\\
		q^{n+1}=\kappa(u^{n+1}).
	\end{cases}
\label{eq.v2.simple}
\end{align}
To improve the overall stability, we use a relaxation step for $q^{n+1}$. Specifically, for some $\alpha_0\in [0,1)$, we compute as
\begin{align}
	q^{n+1}=\alpha_0 \kappa(u^{n+1})+ (1-\alpha_0) q^{n+1/2}.
	\label{eq.q2.relax}
\end{align}

\subsection{On the choice of $\tau$}
The Gaussian kernel parameter $\tau$ controls how well the Gaussian convolution in (\ref{eq.ener.10}) controls the boundary integral. Smaller $\tau$ gives higher approximation accuracy, while a larger $\tau$ makes the algorithm more stable at the beginning of the iteration.
In this paper, we follow the strategy used by \cite{wang2021iterative} in which a decreasing sequence of $\tau$ is used. Specifically, we reduce $\tau$ by half when $\|u^{n+1}-u^{n}\|\leq \epsilon$ for some small $\epsilon>0$. Our algorithm is summarized in Algorithm \ref{alg3}.

\begin{algorithm}[t!]
\caption{Total algorithm for updating $(u^n,q^n) \xrightarrow{}(u^{n+1},q^{n+1})$ 
\label{alg3}}
\begin{algorithmic}
\State \textbf{Input}: Initialize $u^0$, $q^0 =\kappa(u^0) $ and $\tau =\tau_0$. $\mathbf{\alpha},\mathbf{\beta}$: Scaling exponent parameter. \textbf{d}: distance space of input point cloud. The stopping criteria coefficient of parameter $\tau$ is $m \geq 1$.  \\
\While{ i $\leq$ $i_{max}$}{
      \State \textbf{Step 1:} Update \((u^{n},q^{n}) \to (u^{n+1/2},q^{n+1/2})\) by (\ref{eq.updatew}) and (\ref{eq.q1.05}).
      \State \textbf{Step 2:} Update \((u^{n+1/2},q^{n+1/2}) \to (u^{n+1},q^{n+1})\) by (\ref{eq.v2.simple}) and (\ref{eq.q2.relax}).
      \State \If{$\|u^{n+1}-u^{n}\|\leq \epsilon$}
    {
        \State$\tau_{new} = \frac{1}{2} \tau$
        \State \If{$\tau \leq \frac{\tau_0}{m}$}{
        \State \textbf{Break}
        }
        \State$\tau = \tau_{new}$
    }
      \State $i = i+1$
}
\label{algorithm3}
\State \textbf{Output}: The converged $u^{n}$.
\end{algorithmic}
\end{algorithm}

\section{Numerical implementation details}
\label{sec.Numerical}
We discuss implementation details in this section.

\subsection{Numerical discretization}
\label{sec.numdis}

We discuss the discretization of functions defined on a two-dimensional domain; the extension to the three-dimensional case is analogous. 
We introduce an $N \times N$ uniform mesh (where $N \in \mathbb{N}$) with spacing $\Delta x = \Delta y = \frac{2\pi}{N}$ 
such that the grid is embedded in the square domain 
\[
\Omega = \left\{\left( -\pi + \frac{2i\pi}{N}, -\pi + \frac{2j\pi}{N} \right) \mid i,j \in \{1,2,\dots,N\}\right\} \subset \mathbb{R}^2,
\]
where $(i,j)$ denotes the integer grid indices. 
Let $u_{i,j}$ denote the value of the function $u$ at the grid point $(x_i, y_j) = \left( -\pi + \frac{2i\pi}{N}, -\pi + \frac{2j\pi}{N} \right) $. 
Partial derivatives of $u$ at $(x_i, y_j)$ are then approximated using central finite difference schemes.
\begin{align}
    \partial_1 u_{i,j}=\begin{cases}
		\frac{u_{i+1,j}-u_{i-1,j}}{2\Delta x} & 1 < i < N,\\
		\frac{u_{2,j}-u_{N,j}}{2\Delta x} & i =1,\\
		\frac{u_{1,j}-u_{N-1,j}}{2\Delta x} &i =N,
	\end{cases}
\qquad
    \partial_2 u_{i,j}=\begin{cases}
		\frac{u_{i,j+1}-u_{i,j-1}}{2\Delta y}& 1 < j < N,\\
		\frac{u_{j,2}-u_{j,N}}{2\Delta y} & j =1,\\
		\frac{u_{j,1}-u_{j,N-1}}{2\Delta y}& j =N.
	\end{cases}
\end{align}
The discretized gradient and normal vector are computed as
\begin{align*}
 &\nabla u_{i,j} = (\partial_1u_{i,j},\partial_2u_{i,j}), \  \mathbf{n}_{i,j}=\frac{ \nabla u_{i,j}}{|\nabla u_{i,j}|} \mbox{ with } |\nabla u_{i,j}|=\sqrt{(\partial_1u_{i,j})^2+ (\partial_2u_{i,j})^2}.
\end{align*}
Denote $\mathbf{n}_{i,j}=\left((n_1)_{i,j}, (n_2)_{i,j}\right)$. The discretized curvature is computed as
 \begin{align}
        (\kappa(u))_{i,j} =\partial_1(n_1)_{i,j} + \partial_2(n_2)_{i,j} .
        \label{kappa.normal}
 \end{align}

\subsection{Computation of the distance function}
\label{sec.distancefunction}
For a given point cloud $\mathcal{C} \subset \Omega$, let $ d(\bx) = \inf_{\mathbf{y}\in \mathcal{C}} |\mathbf{x} - \mathbf{y}| $ denote the Euclidean distance from a point $\bx$ to the point cloud $\mathcal{C}$. This distance function can be obtained by solving the Eikonal equation, which is given as
\begin{align}
    \begin{cases}
        | \nabla d|  = 1,
        \\
        d(\bx) = 0, \forall \bx \in \mathcal{C}.
    \end{cases}
\end{align}
To solve the equation, we adopt the first-order Lax-Friedrichs fast sweeping scheme \cite{kao2004lax}:
\begin{align}
\begin{cases}
d_{i,j}^{n+1} = 0, \quad \forall (i,j) \text{ such that } \mathbf{x}_{i,j} \in \mathcal{C}, \\
d_{i,j}^{n+1} = \min \left[d_{i,j}^{n} ,\frac{1}{\frac{\sigma_x}{\Delta x}+\frac{\sigma_y}{\Delta x}} \left( 1 - |\nabla d_{i,j}^{n}|  + \sigma_x \frac{d_{i+1,j}^{n} + d_{i-1,j}^{n}}{2\Delta x} + \sigma_y \frac{d_{i,j+1}^{n} + d_{i,j-1}^{n}}{2\Delta x} \right) \right] , \text{otherwise},
\label{lffunction}
\end{cases}
\end{align}
where $\Delta x = \frac{2\pi}{N}$ denotes the grid spacing, and $\sigma_x =\sigma_y = 1 $ represents the artificial viscosity (or directional weights) that weight the contributions in the $x$ and $y$ directions. We set $d^0 = 10^3 \cdot \mathbf{1}$ (all-ones matrix) as the initial guess, and the iteration stops at $\left|d^n - d^{n-1}\right| \leq \epsilon$. The converged $d^n$ is denoted as $d$, which is the distance function.

\subsection{Post-processing}
\label{sec.pp}
Since the output $u$ is a binary function, directly visualizing $u$ results in severe staircase artifacts. Prior to visualization, we transform $u$ into a level set function $\Theta^0$ according to the following relation:
\begin{align}
	\Theta^0 = G_{\tau}\ast [d(1-2u)].
\end{align}
We then reinitialize the level set function $\Theta$ by solving 
\begin{align}
    \frac{\partial\Theta}{\partial t} + \mathrm{sign}(\Theta)(1 - \left|\nabla \Theta\right|) = 0.
\end{align}
To solve this equation, we integrate the artificial viscosity term of the Lax-Friedrichs scheme with the second-order Runge-Kutta method following the framework in \cite{osher2003level}. Denote the artificial time step by $\Delta l$, the scheme is given by:
\begin{equation}
\begin{cases}
k_1 = -H_{i,j}^{1} \cdot \Delta l  , \\
\Theta^{n-1/2}_{i,j} = \Theta^{n-1}_{i,j} + k_1  , \\
k_2 = -H_{i,j}^{2} \cdot \Delta l  , \\
\Theta^n_{i,j} = \Theta^{n-1}_{i,j} + \frac{1}{2}(k_1 + k_2)  ,
\end{cases}
\end{equation}
where
\begin{align}
    \begin{cases}
    H_{i,j}^{1} =  H(\nabla \Theta^{n-1}_{i,j})  - \sigma\left( \frac{D^{+}_1\Theta^{n-1}_{i,j}-D^{-}_1\Theta^{n-1}_{i,j}}{2\Delta x} +  \frac{D^{+}_2\Theta^{n-1}_{i,j}-D^{-}_2\Theta^{n-1}_{i,j}}{2\Delta x}\right),
    \\
    H_{i,j}^{2} =  H(\nabla \Theta^{n-1/2}_{i,j})  - \sigma\left(\frac{D^{+}_1\Theta^{n-1/2}_{i,j}-D^{-}_1\Theta^{n-1/2}_{i,j}}{2\Delta x} +  \frac{D^{+}_2\Theta^{n-1/2}_{i,j}-D^{-}_2\Theta^{n-1/2}_{i,j}}{2\Delta x}\right) .
    \end{cases}
\end{align}
The artificial viscosity coefficient $\sigma$ and pseudo-time $\Delta l$ are typically set to $\sigma = 1$, $\Delta l = 10^{-4}$, and $ \Delta x = \frac{2\pi}{N}$, respectively. To ensure numerical stability and computational efficiency, we impose a maximum number of iterations as the stopping criterion such that $n \leq n_{\text{max}}$. 
\section{Numerical experiments}
\label{sec.experiment}
In this section, we demonstrate the effectiveness of the proposed algorithm through two-dimensional and three-dimensional examples. We compare our method with (i) the Iterative Thresholding Method (ITM) \cite{wang2021iterative}, which solves model (\ref{eq.model.dis}), and (ii) the operator-splitting method \cite{he2020curvature}, which solves model (\ref{eq.ener.0}). We denote this method by OSM+Curvature. All experiments are conducted in MATLAB on a desktop with \textbf{CPU}: AMD Ryzen 9 7900X and 16GB of RAM
 
\subsection{Two-dimensional examples}
\label{sec.experiment2D} 
All two-dimensional numerical experiments are conducted on a Cartesian grid of size $100 \times 100$. For consistency, all point cloud data are defined on the domain $\mathcal{H} = [-\pi, \pi] \times [-\pi, \pi] \subset \mathbb{R}^2$. Unless otherwise specified, the default parameter values are set as follows: $\tau_0 = 0.04$, $\Delta t = 800$, $k = 32$, $\eta = 0.001{\Delta x}^{\beta}$, $\alpha = 4$, $\beta = 4$, $\gamma = 0.04$, and $\alpha_0 = 0.99$. 
We set the number of iterations in reinitalization as $n_{max} =5$. 
The initial condition $u_0$ is defined as a centered box function such that:
\begin{align*}
    \begin{cases}
        u_0(\bx) = 1,\  \bx \in \mathcal{D},\\
        u_0(\bx) = 0, \ \bx \notin \mathcal{D},
    \end{cases}
\end{align*}
where the spatial domain $\mathcal{D} =\left[ -\frac{3\pi}{4}, \frac{3\pi}{4} \right] \times \left[ -\frac{3\pi}{4}, \frac{3\pi}{4} \right]$.
We update the iteration parameter as $\tau_{\text{new}} = \frac{\tau}{2}$ if the $L_1$-norm of the update satisfies $\| u_{n+1} - u_n \|_1 = 0$. The iterative process terminates when $\tau_{\text{new}} \leq \frac{\tau_0}{m}$, where $m > 1$, specifying the termination threshold. In the choice of $\eta$, we include a factor of $\Delta x^{\beta}$. We note that $\mathbf{n}_{i,j}$ is a unit vector by definition, and the magnitude of the elements of $\mathbf{n}$ is of $O(1)$. Given that $u$ is a binary function, $\pm (n_{i\pm1,j}-n_{i,j})$ and $\pm (n_{i,j\pm1}-n_{i,j})$ also have magnitude of $O(1)$. According to (\ref{kappa.normal}), the computed curvature has a magnitude of $O(\Delta x^{-\beta})$. In order to balance the scales of the two terms in (\ref{eq.ener.0}), the scaling factor ${\Delta x}^\beta$ is introduced for parameter $\eta$.

\subsubsection{General performance}
\label{section7.1}
\begin{figure}[htbp]
    \centering
    \begin{minipage}{0.32\linewidth}
        \centering
        \subfigure{\includegraphics[trim=45 0 40 20, width=\linewidth]{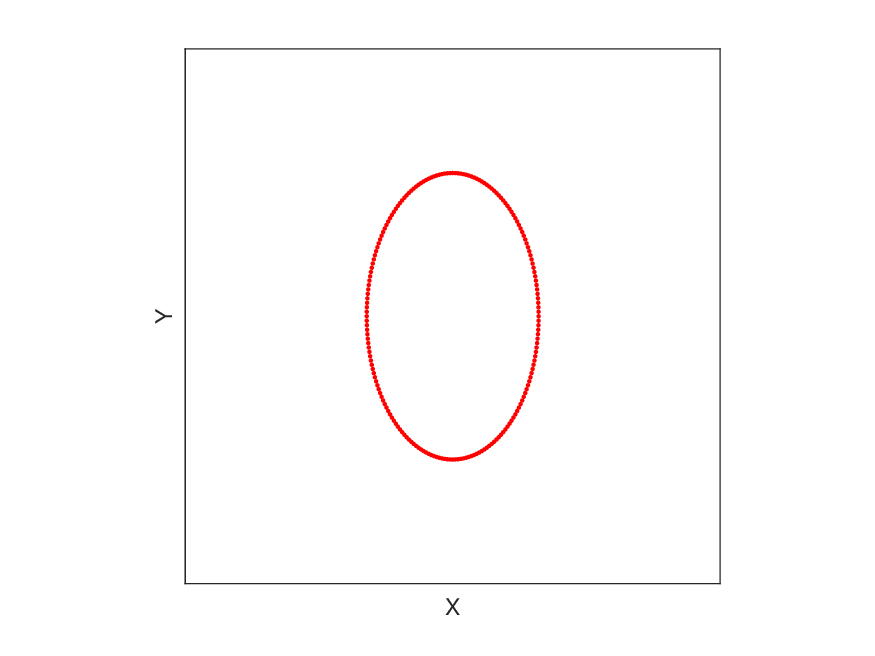}}\\
        \subfigure{\includegraphics[trim=45 0 40 20, width=\linewidth]{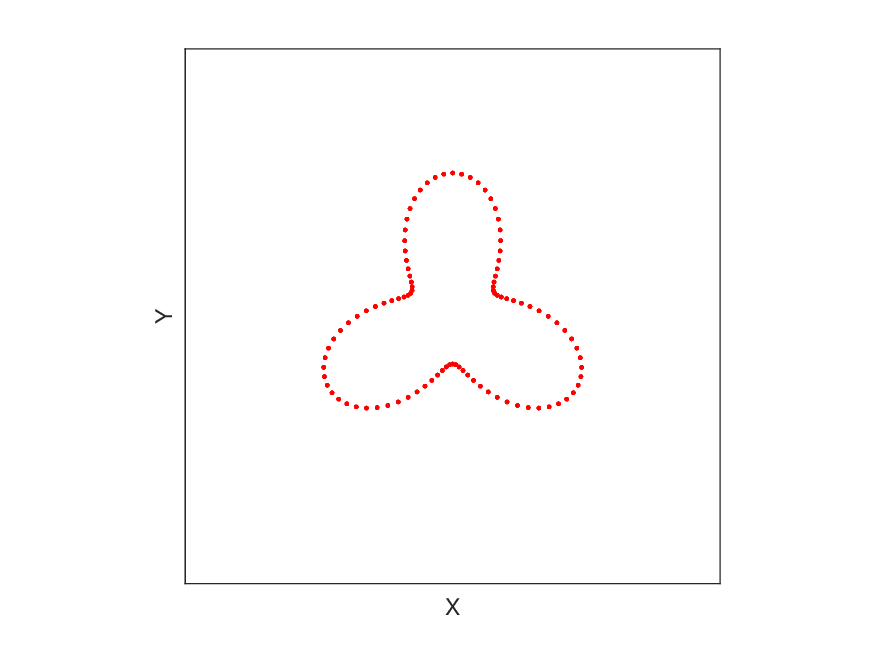}}\\
        \subfigure{\includegraphics[trim=45 0 40 20, width=\linewidth]{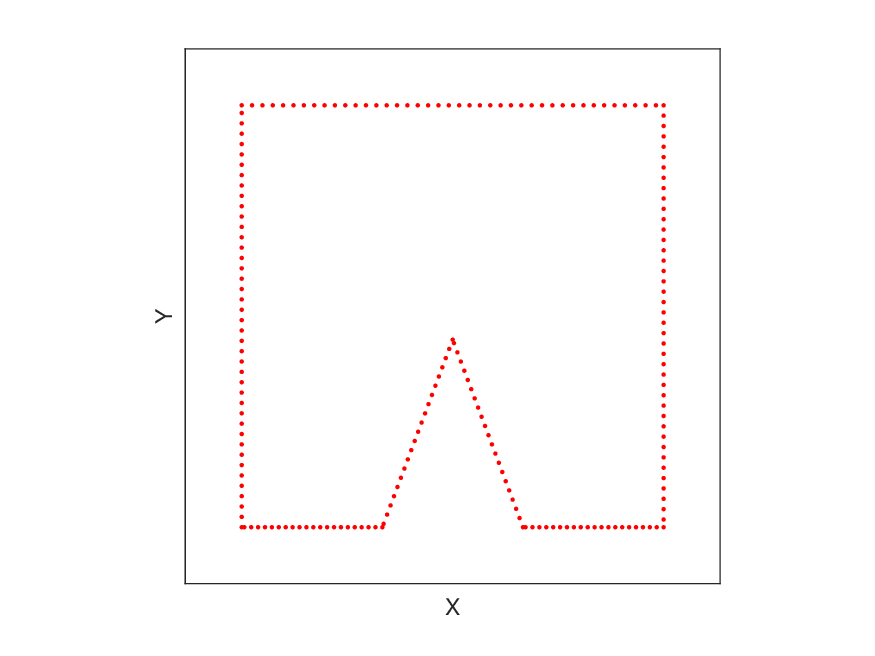}}\\
        \subfigure{\includegraphics[trim=45 0 40 20, width=\linewidth]{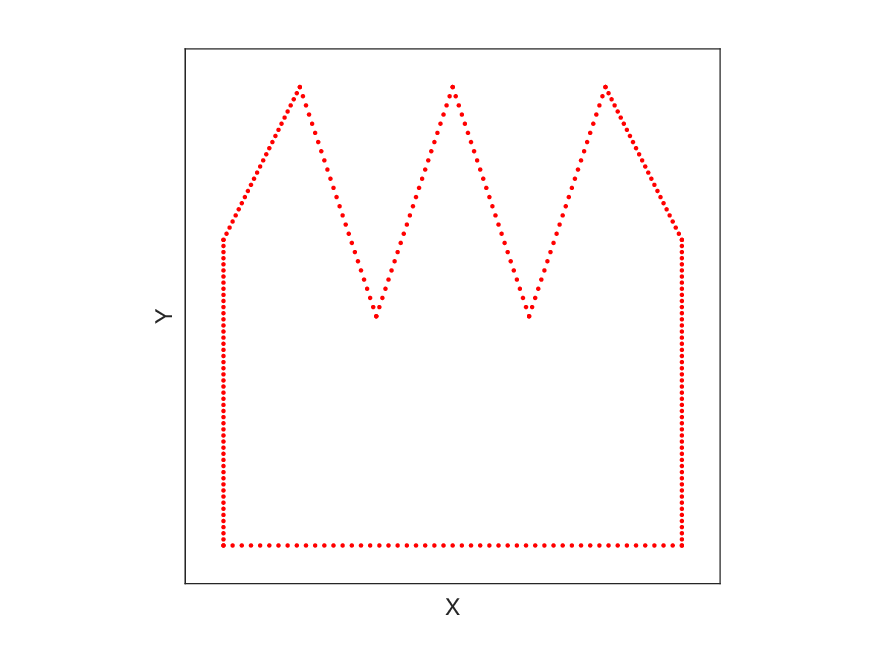}}
    \end{minipage}
    \hfill
    \begin{minipage}{0.32\linewidth}
        \centering
        \subfigure{\includegraphics[trim=45 0 40 20, width=\linewidth]{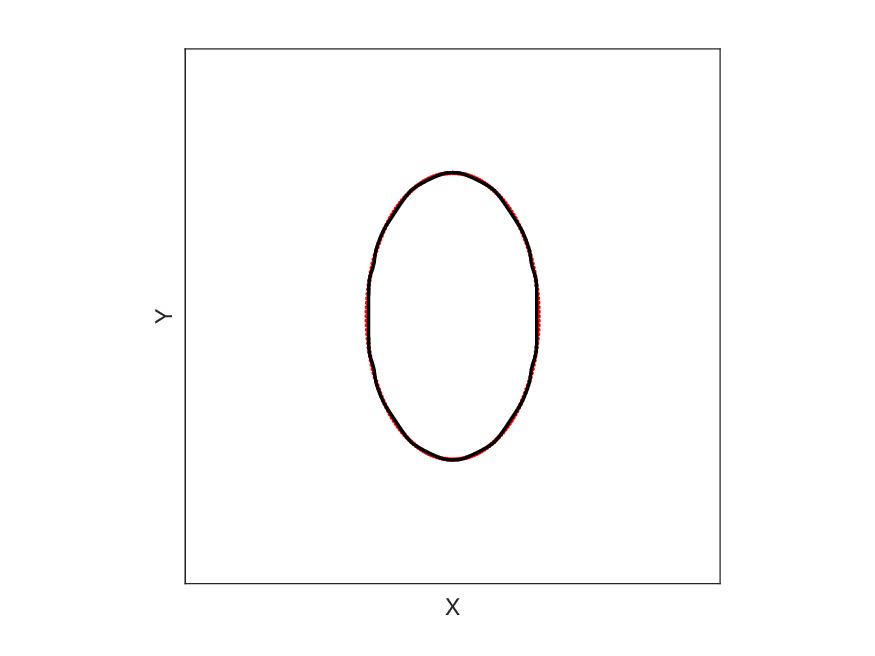}}\\
        \subfigure{\includegraphics[trim=45 0 40 20, width=\linewidth]{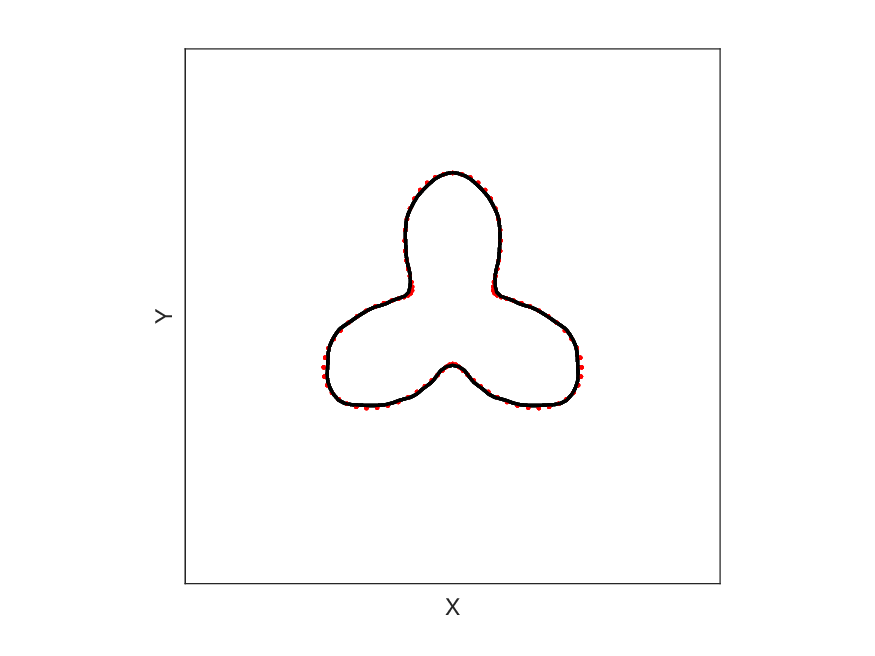}}\\
        \subfigure{\includegraphics[trim=45 0 40 20, width=\linewidth]{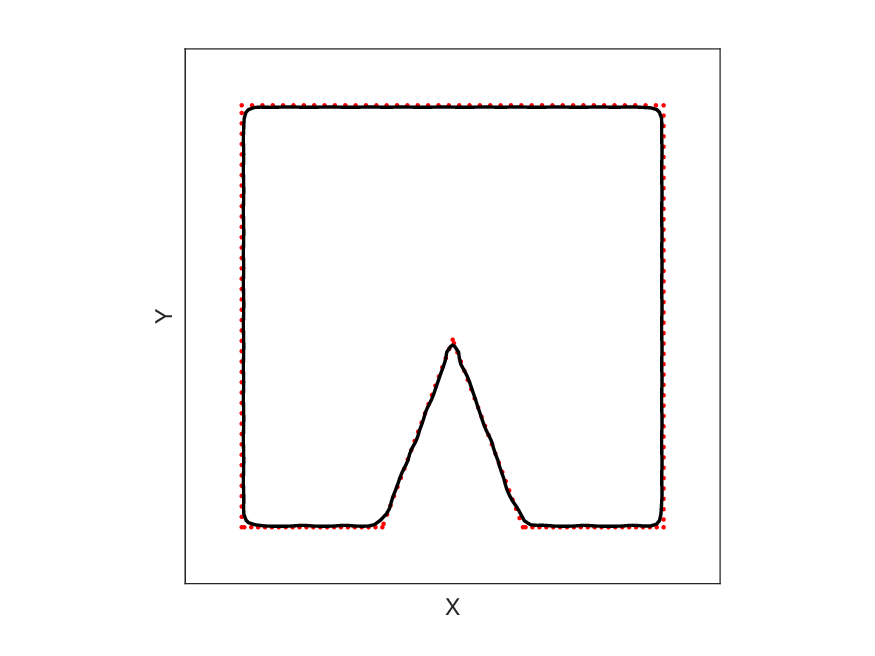}}\\
        \subfigure{\includegraphics[trim=45 0 40 20, width=\linewidth]{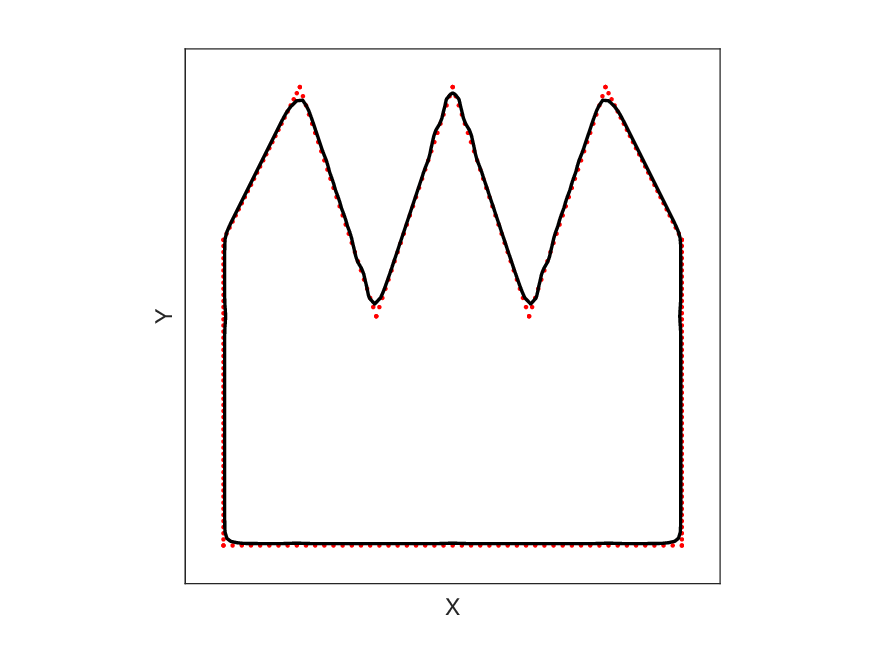}}
    \end{minipage}
    \hfill
    \begin{minipage}{0.33\linewidth}
        \centering
        \subfigure{\includegraphics[trim=45 0 40 20, width=\linewidth]{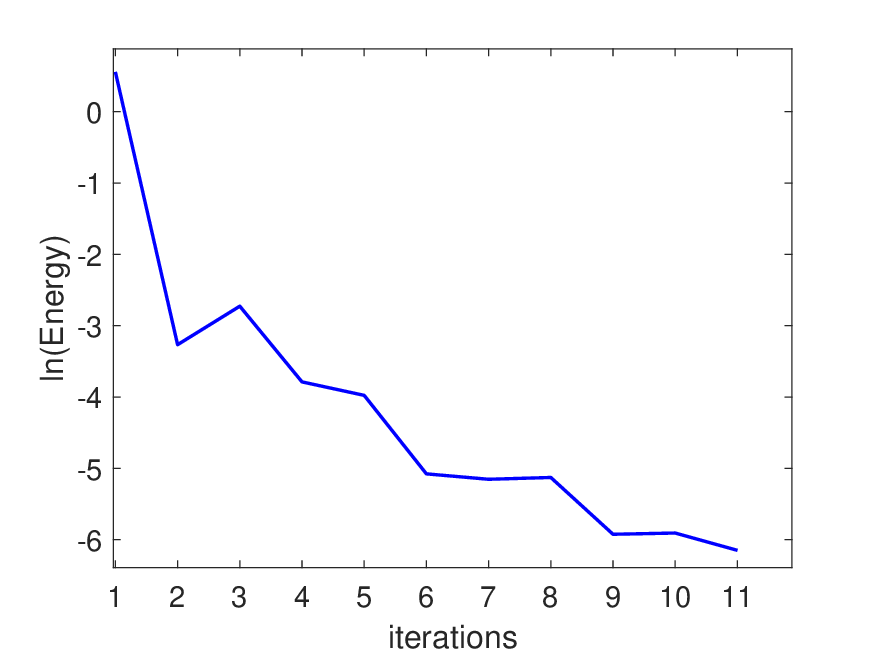}}\\
        \subfigure{\includegraphics[trim=45 0 40 20, width=\linewidth]{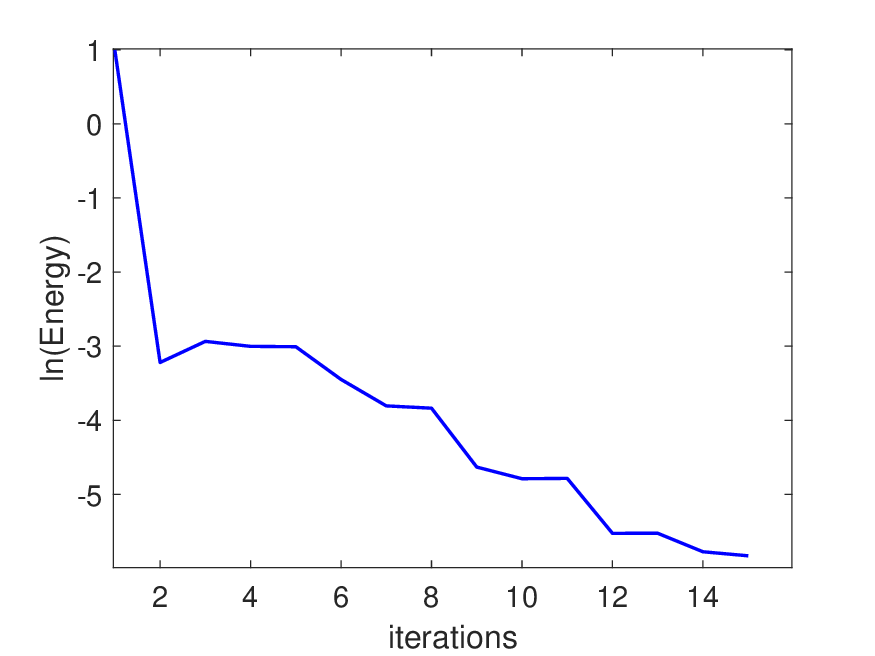}}\\
        \subfigure{\includegraphics[trim=45 0 40 20, width=\linewidth]{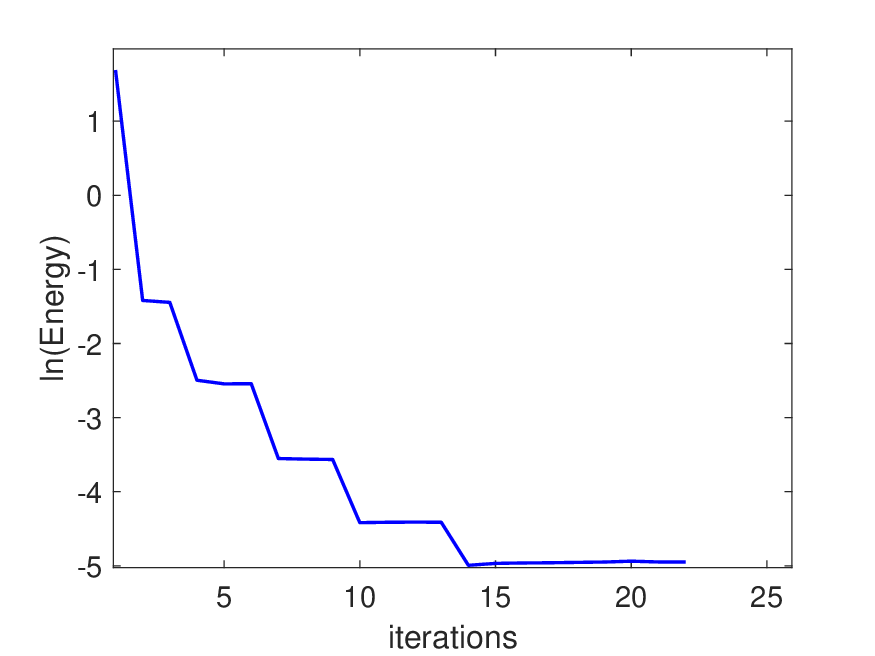}}\\
        \subfigure{\includegraphics[trim=45 0 40 20, width=\linewidth]{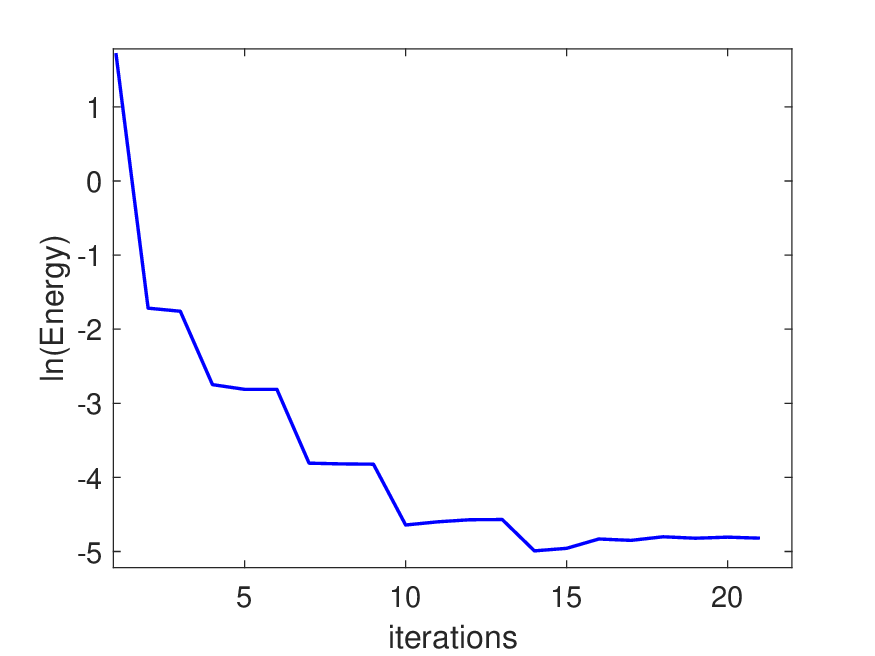}}
    \end{minipage}

    \caption{(Two-dimensional example.) General performance of the proposed method. Column 1: point cloud. Column 2: reconstructed curve by the proposed method. Column 3: Energy history.}
    \label{fig:2DExampleEN}
\end{figure}
We first apply the proposed algorithm to four two-dimensional point clouds. The first two point clouds are sampled from smooth curves, while the last two point clouds are sampled from curves with sharp corners and angles. The point clouds and the reconstructed curves are shown in the first and second column of Figure \ref{fig:2DExampleEN}. In this set of experiments, the proposed algorithm reconstructs the underlying curves well. Details such as corners and nonconvex features are preserved.

To demonstrate the convergent behavior of our algorithm, we present the energy history of the four examples in the third column of Figure \ref{fig:2DExampleEN}. Our algorithm effectively solves (\ref{eq.ener.0}), and converges within 25 iterations.

\subsubsection{Comparison with other methods}
\label{section7.2}

We compare our method with ITM and OSM+Curvature on four point clouds. The point clouds and results are shown in Figure \ref{fig:Experiment-1}. In each figure, the blue curve represents the ITM method, the green curve represents the OSM+Curvature method, and the black curve represents our proposed method. Red points are point cloud points. In this experiment, we set $\tau=0.04$ for ITM and $\eta=2$ for OSM+Curvature. Here, $\eta$ is the weight parameter of the curvature term in the OSM+Curvature method.

In this comparison, ITM fails to reconstruct sharp angles. This is because ITM solves (\ref{eq.model.dis}), which is based solely on the distance function. To overcome this difficulty, our proposed method and OSM+Curvature solve (\ref{eq.ener.0}), which uses a curvature term as a regularizer, enabling it to capture sharp angles and thus effectively reconstruct both convex and concave features.

Although OSM+Curvature and the proposed method solve the same model, the results produced by OSM+Curvature tends to be over-smoothed, as shown in Figure \ref{fig:Experiment-1} (c) and (d). This is because OSM+Curvature solving this problem by the level set method, for which reinitialization is needed to make the algorithm stable. However, reinitialization introduces additional smoothing effects, which makes the result over-smooth. In this comparison, our method provides the best results and better recovers sharp corners.
\begin{figure}[htbp]
\centering
    \subfigure[ ]{
    \includegraphics[trim = 30 30 25 30,width=0.35\linewidth]{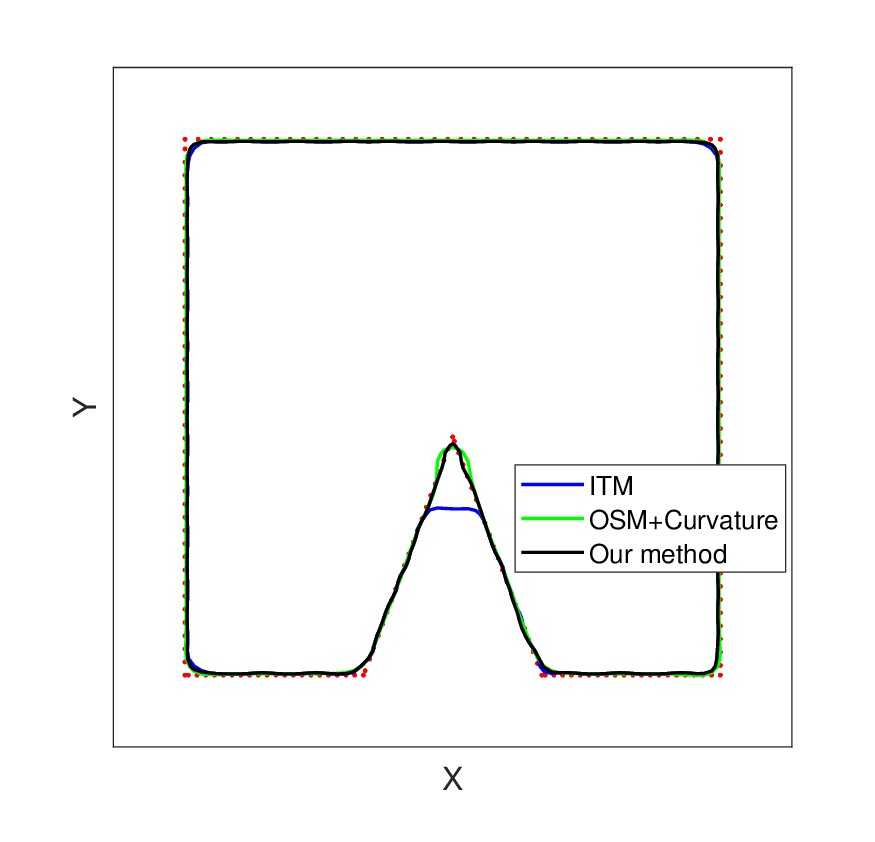}
    }
    \subfigure[ ]{
    \centering
    \includegraphics[trim = 30 30 25 30,width=0.35\linewidth]{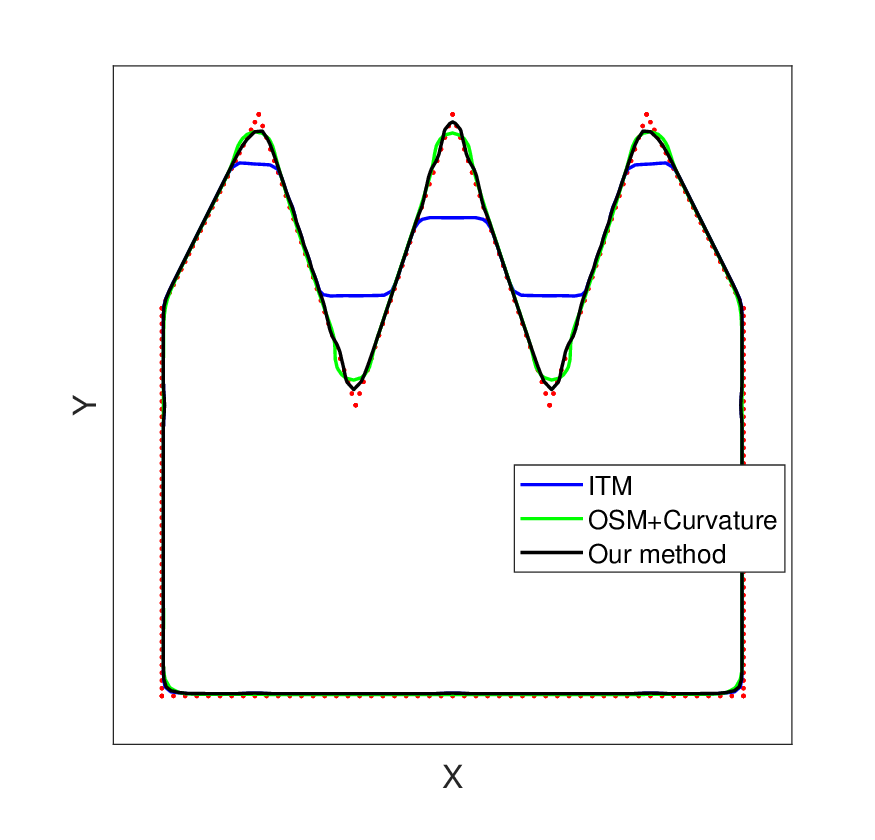}
    }
    \\
    \subfigure[ ]{
    \centering
    \includegraphics[trim =30 30 25 30,width=0.35\linewidth]{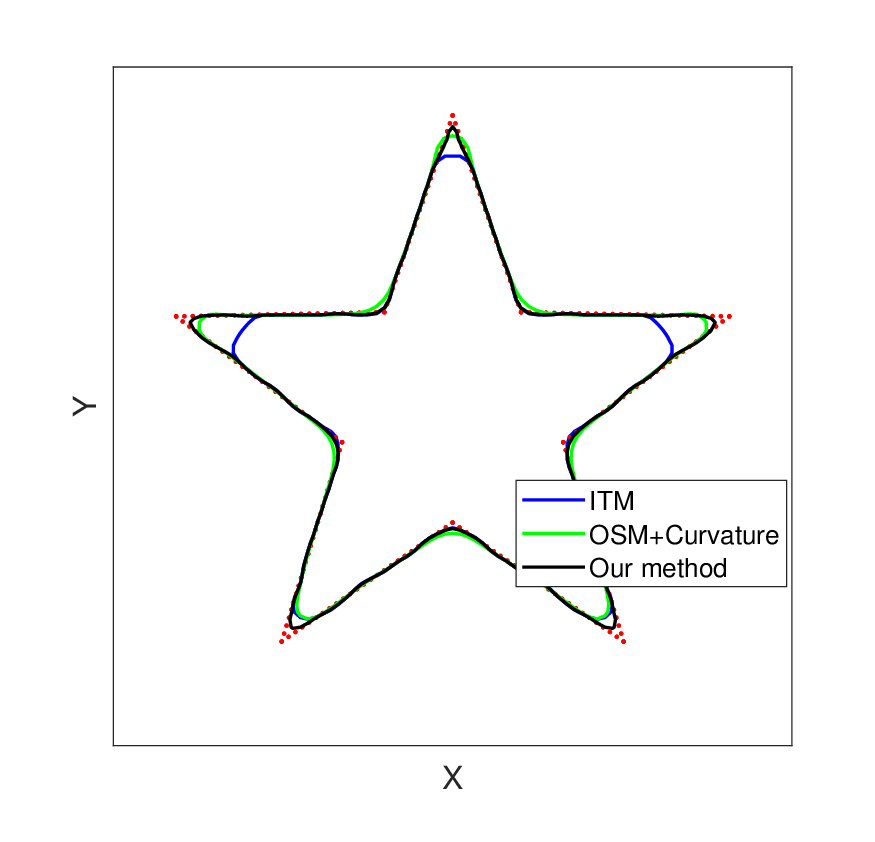}
    }
    \subfigure[ ]{
    \centering
    \includegraphics[trim = 30 30 25 30,width=0.35\linewidth]{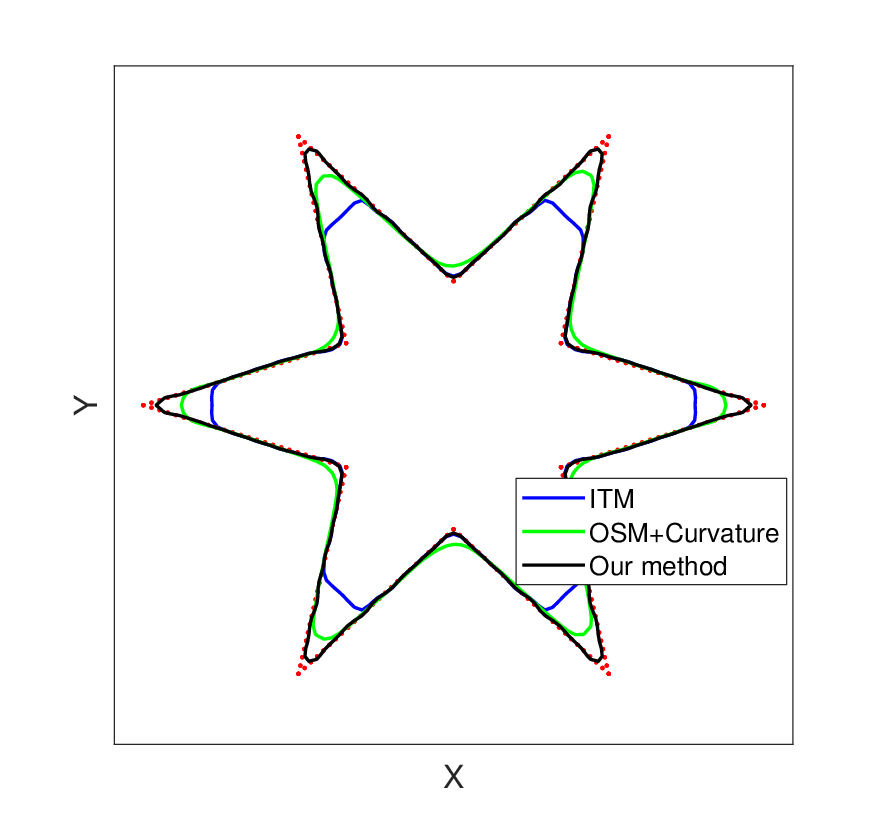}
    }
     \caption{(Two-dimensional example.) Comparison of ITM, OSM+Curvature and our method.
     }
    \label{fig:Experiment-1}
\end{figure}

\begin{table}[t!]
\begin{center}
    
\begin{tabular}{|c| c| c| c| c|} 

 \hline
  & (a) & (b) & (c) & (d)\\ [0.5ex] 
 \hline
 ITM & 0.055s & 0.050s & 0.040s & 0.043s \\ 
 \hline
 OSM+curvature & 21.31s & 23.10s & 4.12s & 4.40s\\
 \hline
 Our method & 0.14s &  0.14s & 0.15s & 0.16s \\
 \hline
\end{tabular}
\end{center}
\caption{(Two-dimensional example.) Required CPU time to compute results in Figure \ref{fig:Experiment-1}.}
\label{table:time1}
\end{table}
We then compare the efficiency of all three methods. Note that OSM+Curvature requires a large fixed number of iterations to reconstruct nonconvex sharp angles. Therefore, for Figure \ref{fig:Experiment-1} (a) and (b), we use 300 iterations for OSM+Curvature. The CPU time required to compute results in Figure \ref{fig:Experiment-1} are shown in Table \ref{table:time1}. ITM is the fastest, and the proposed algorithm is only slightly slower, with CPU time on the same scale. Compared with ITM and our method, OSM+Curvature is much slower. 

In conclusion, our method achieves performance comparable to, or even better than, that of the OSM+Curvature method, while exhibiting a significantly reduced computational cost, which is comparable to ITM.
\subsubsection{Effects of parameter $\alpha$ and $\beta$ }
\label{section7.3}
\begin{figure}[htbp]
    \centering
    \subfigure[ ]{
    \includegraphics[trim = 30 30 25 30,width=0.35\linewidth]{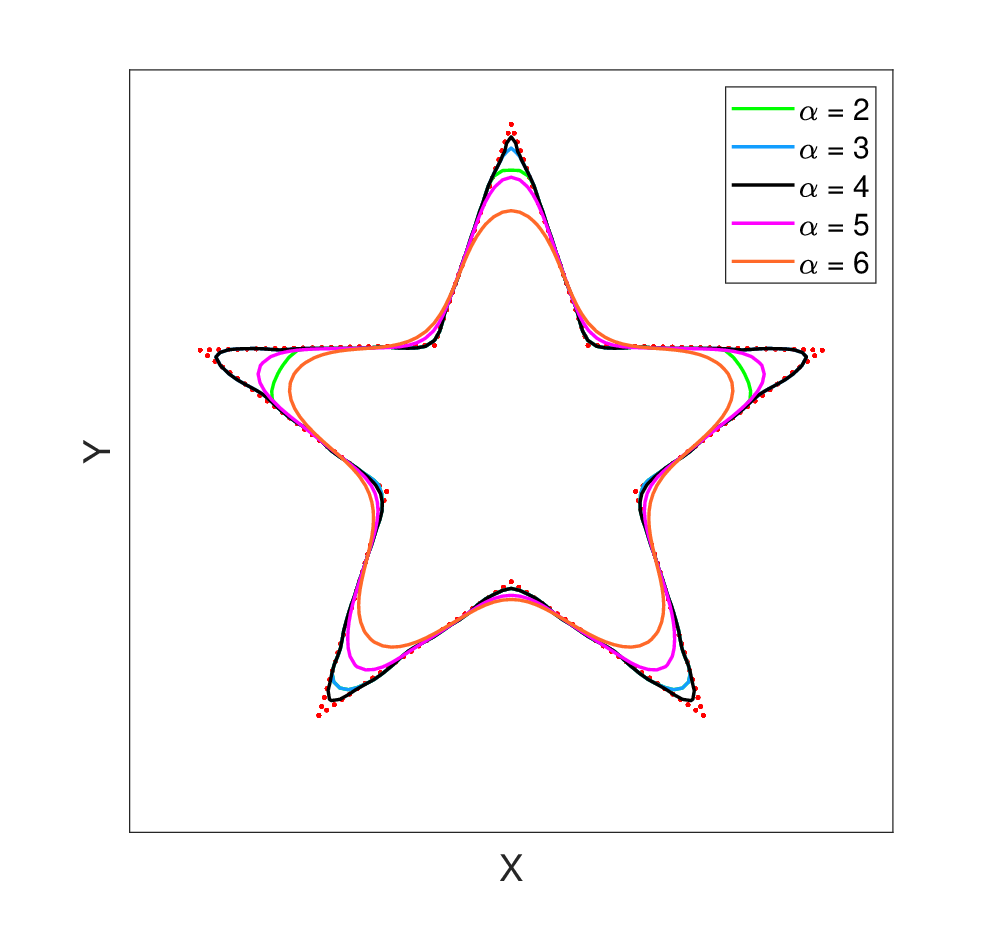}
    }
    \subfigure[ ]{
    \includegraphics[trim = 30 30 25 30,width=0.35\linewidth]{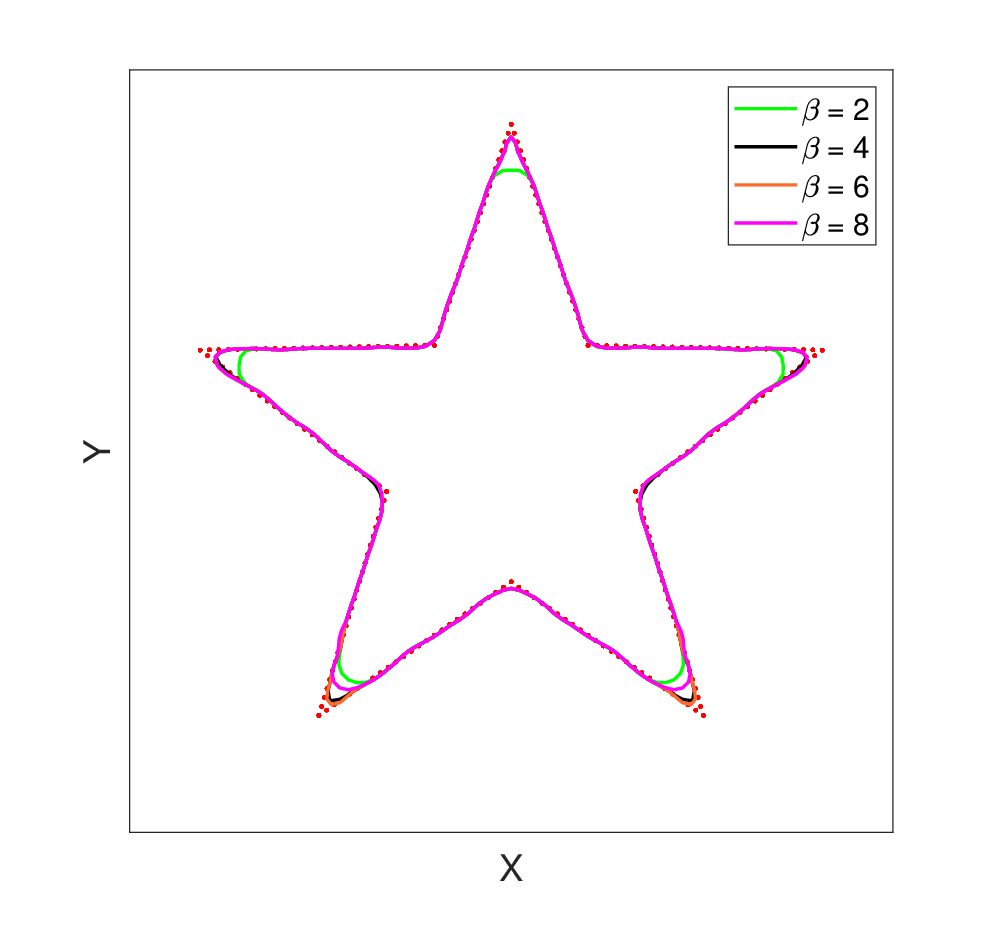}
    }
    
    \caption{(Two-dimensional example.) Reconstruction of five-pointed star by our method with various $\alpha$ and $\beta$.}
    \label{fig:Experiment-8}
\end{figure}
In (\ref{eq.ener.0}), $\alpha$ and $\beta$ are exponents of the distance term and curvature term, respectively. In this section, we conduct an ablation study to investigate the effects of $\alpha$ and $\beta$.

Figure \ref{fig:Experiment-8} (a) depicts the results for \(\beta = 4\) and \(\alpha = 2, 3, 4, 5, 6\). It indicates that our algorithm with \(\alpha = 4\) achieves the best reconstruction in the five-pointed star result. For \(\alpha = 2, 5, 6\), the algorithm cannot converge to sharp angles. For \(\alpha = 3\) and \(4\), the algorithm successfully recovers the sharp features. However, the performance for \(\alpha = 4\) is slightly better than that for \(\alpha = 3\).

Figure \ref{fig:Experiment-8} (b) depicts the results for \(\alpha = 4\) and \(\beta = 2, 4, 6, 8\). Since the parameter $\beta$ influences the scaling factor of $\eta$, we set the corresponding values as $\eta = 0.004{\Delta x}^2$, $\eta = 0.004{\Delta x}^4$, $\eta = 0.004{\Delta x}^6$, and $\eta = 0.004{\Delta x}^8$, respectively. According to Figure \ref{fig:Experiment-8} (b), when \(\beta = 2\), the algorithm fails to converge at the sharp corners of the five-pointed star. A higher exponent increases the influence of \(\kappa^{\beta}\) in the energy near sharp features. However, when \(\beta = 6, 8\), there is no significant improvement compared to \(\beta = 4\). It appears that increasing the weight of \(\kappa^{\beta}\) beyond this point yields diminishing returns.

In conclusion, the parameter pair \(\alpha =4,\beta = 4\) is a reasonable choice of parameters.

\subsubsection{Effects of $\eta$}
\label{section5.2}
\begin{figure}[t!]
    \centering
    
    \includegraphics[trim = 30 30 25 30,width=0.35\linewidth]{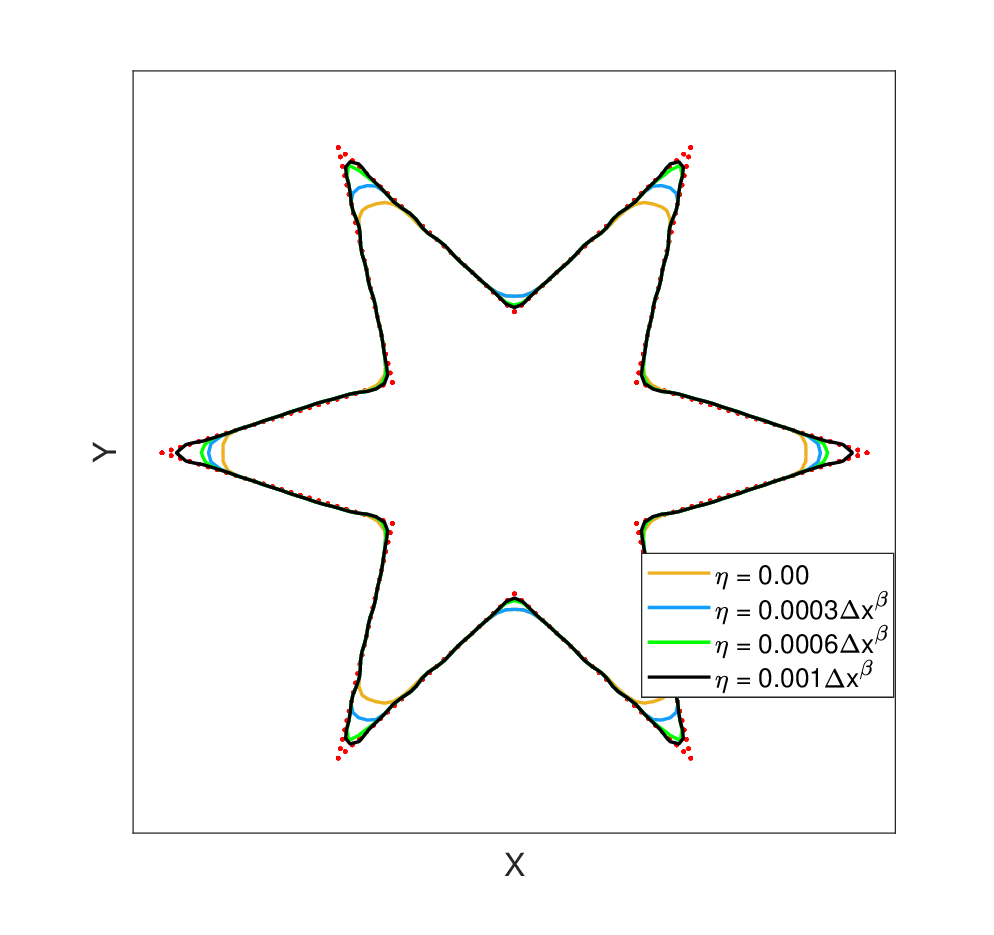}
    
    \caption{(Two-dimensional example.) Reconstruction of six-pointed star with various parameter $\eta$.}
    \label{fig:Experiment-5}
\end{figure}
According to (\ref{eq.ener.0}), the parameter $\eta$ controls the strength of the curvature term. We conduct four experiments using different values of this parameter, including $\eta = 0,\eta = 0.0003{\Delta x}^\beta,\eta = 0.0006{\Delta x}^\beta,\eta = 0.001{\Delta x}^\beta$ with  $\beta =4$. The experimental results are shown in Figure \ref{fig:Experiment-5}. The results demonstrate that, as $\eta$ increases, sharp corners are reconstructed more accurately. Thus, in our experiments, we use $\eta=0.001\Delta x^{\beta}$.

\begin{figure}[t!]
    \centering
    \subfigure{\includegraphics[trim=25 25 25 25,clip,width=0.32\linewidth]{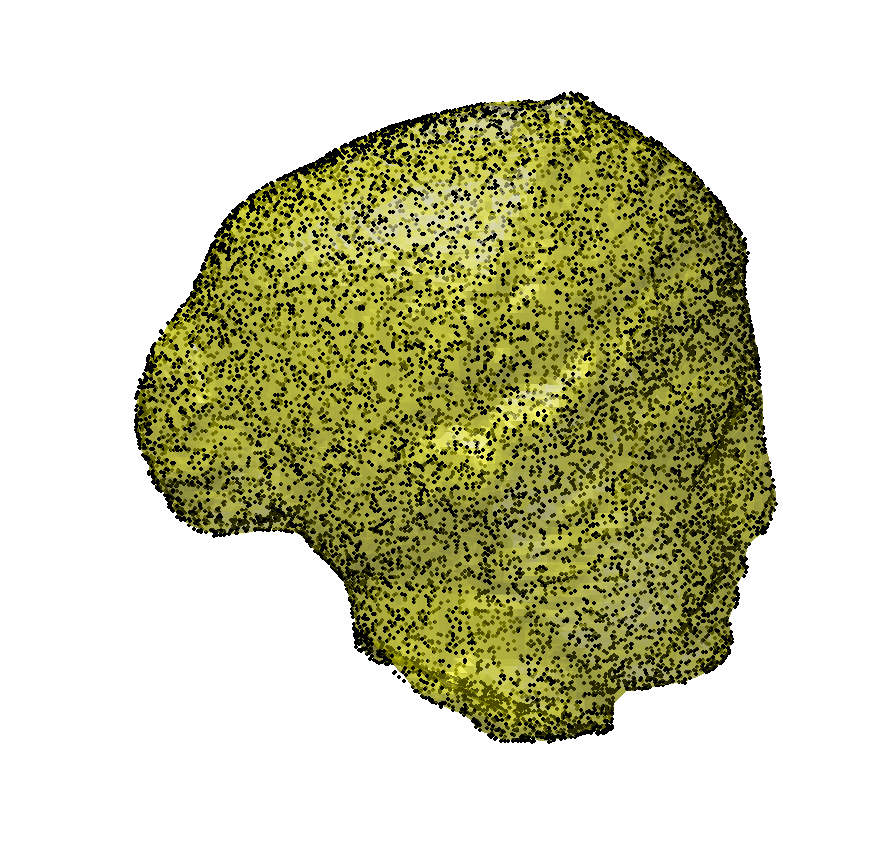}}%
    \hfill
    \subfigure{\includegraphics[trim=25 25 25 25,clip,width=0.32\linewidth]{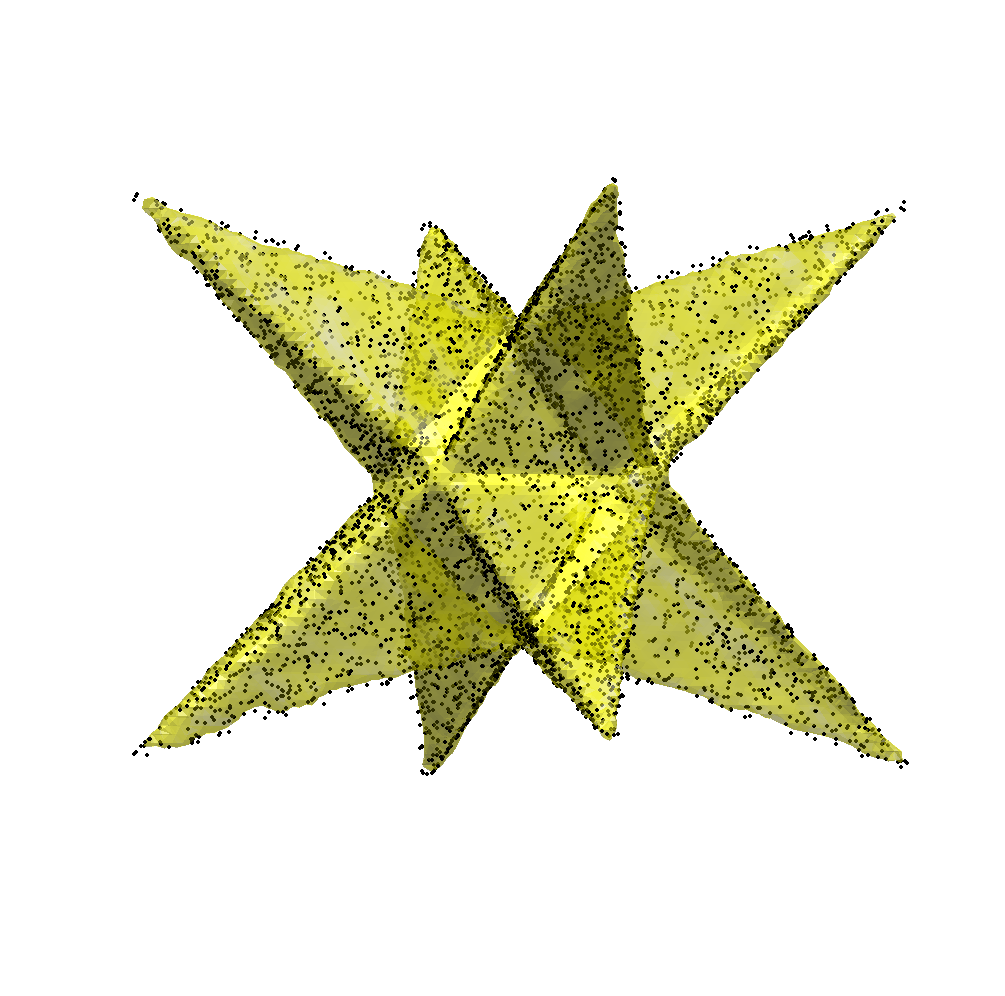}}%
    \hfill
    \subfigure{\includegraphics[trim=25 25 25 25,clip,width=0.32\linewidth]{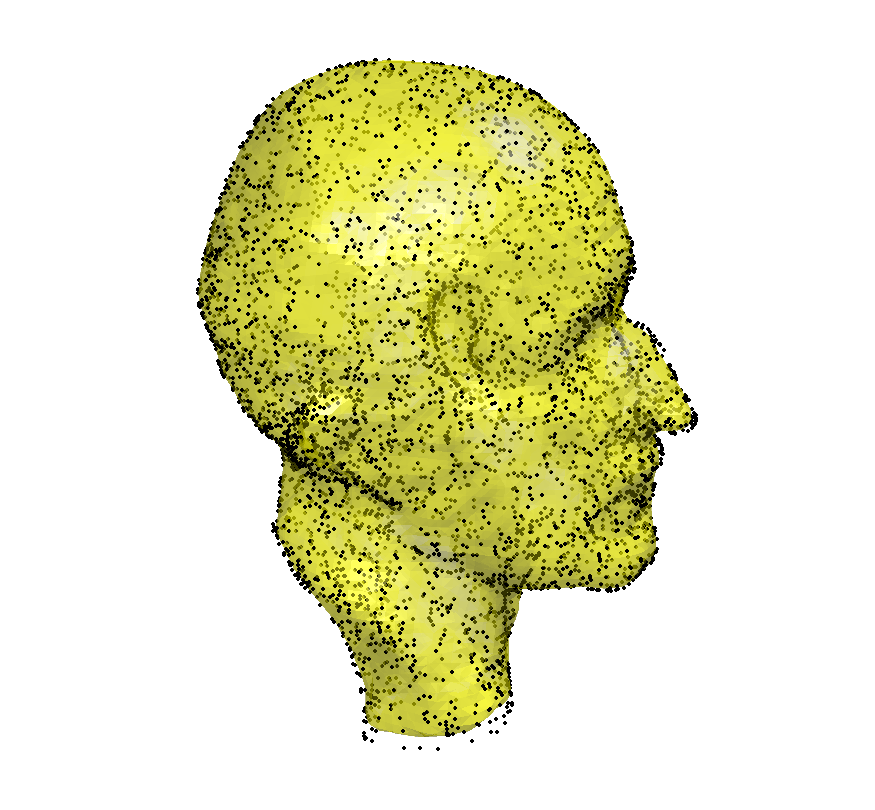}}%

    \vspace{0.5em}

    \subfigure{\includegraphics[trim=100 70 70 70,clip,width=0.32\linewidth]{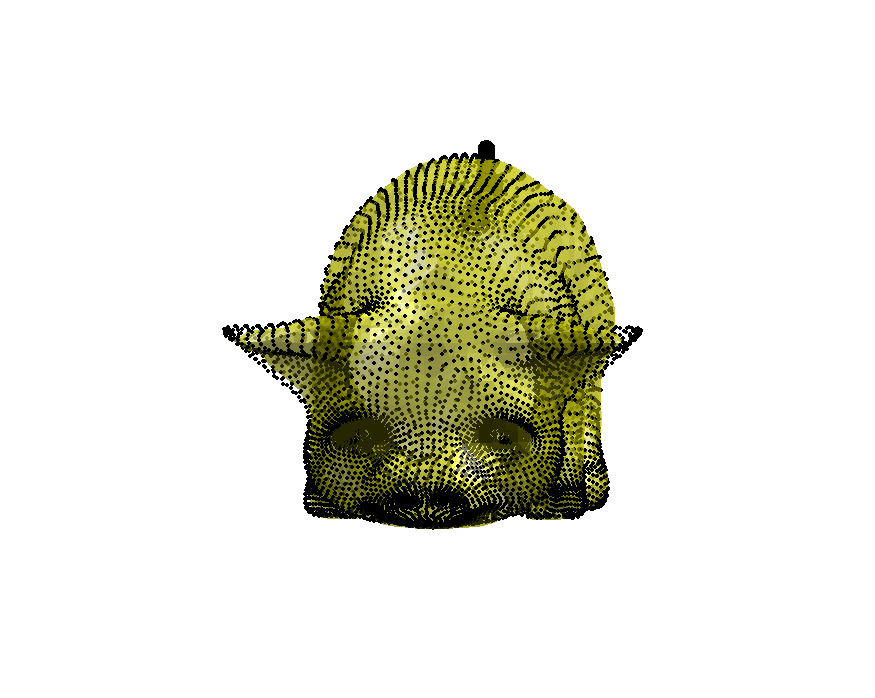}}%
    \hfill
    \subfigure{\includegraphics[trim=25 25 25 25,clip,width=0.32\linewidth]{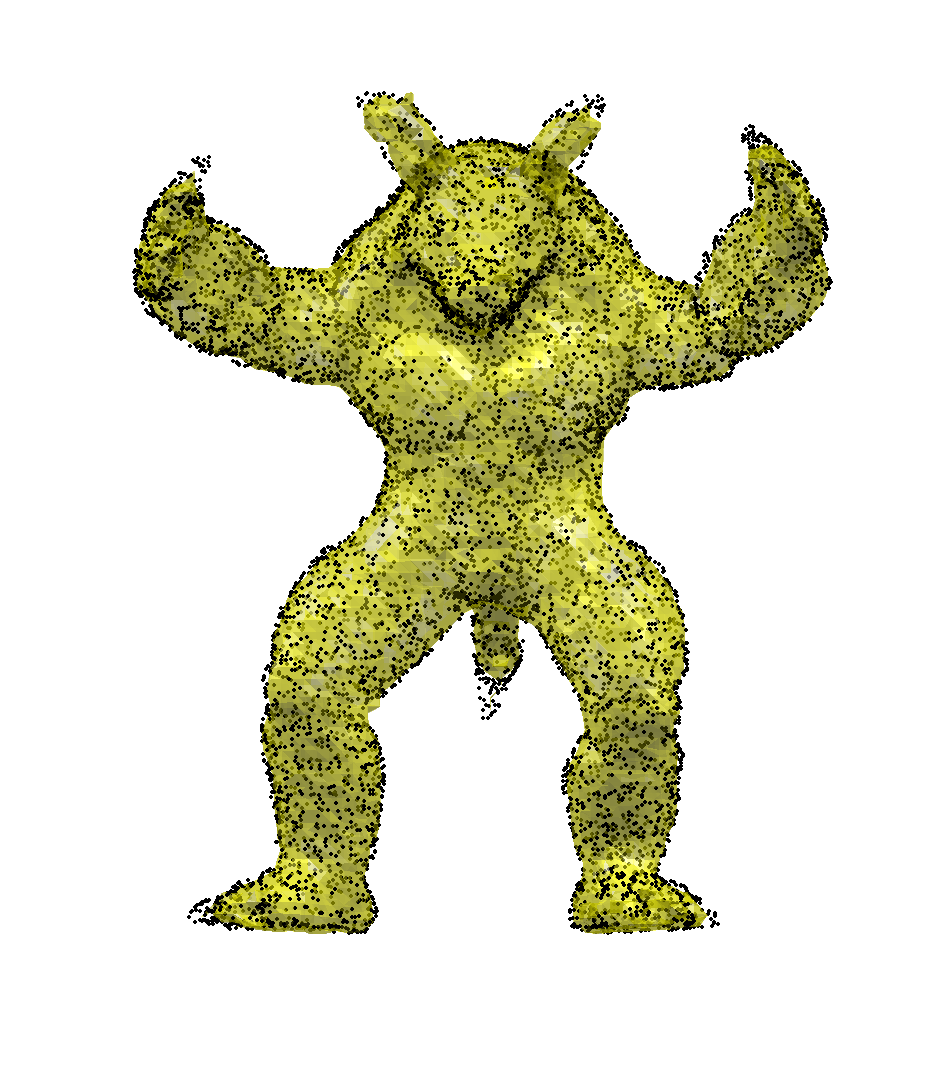}}%
    \hfill
    \subfigure{\includegraphics[trim=25 25 25 25,clip,width=0.32\linewidth]{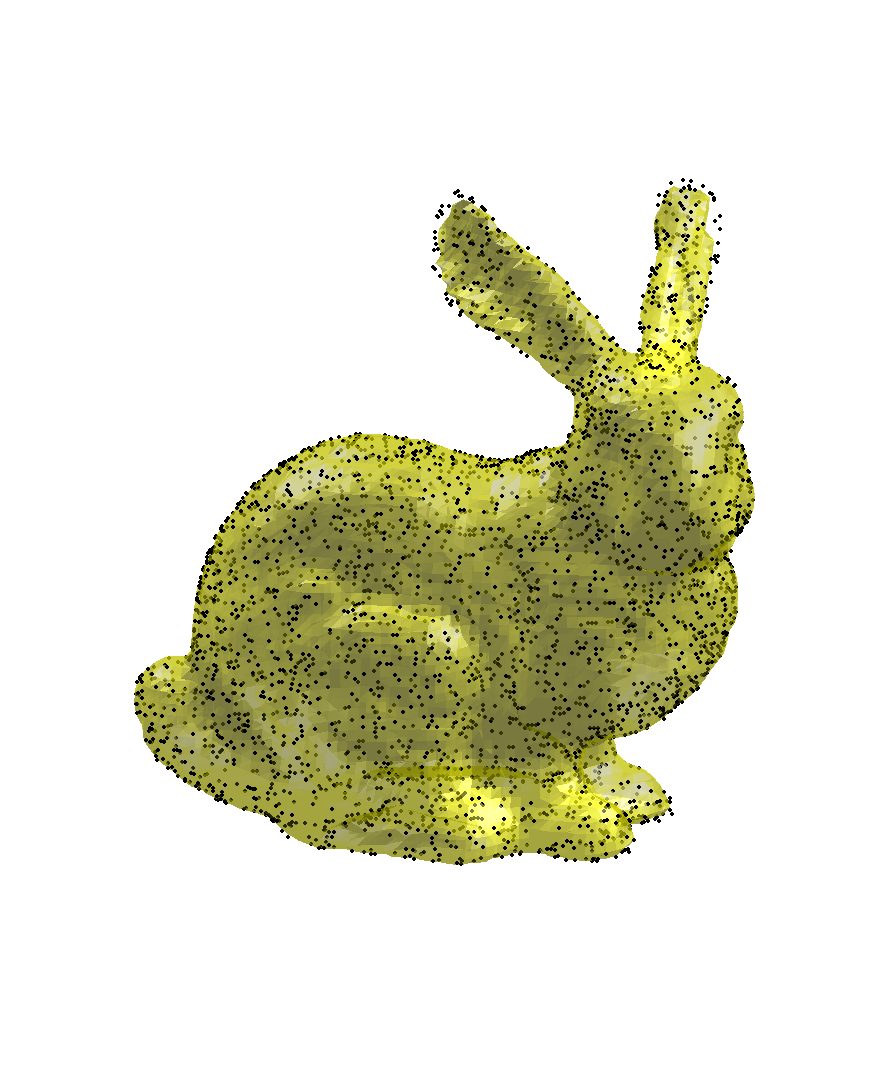}}%

    \caption{(Three-dimensional example.) General performance of our method.}
    \label{fig:Example3D}
\end{figure}

\subsection{Three-dimensional examples}
\label{sec.experiment3D}
We consider three three-dimensional point cloud datasets and test the proposed algorithm. 
All experiments are conducted in the domain $\mathcal{H} = [-\pi, \pi] \times [-\pi, \pi] \times [-\pi, \pi] \subset \mathbb{R}^3$. The initial condition $u_0$ is defined as a centered cube function such that:
\begin{align*}
    \begin{cases}
        u_0(\bx) = 1, \ \bx \in \mathcal{D},\\
        u_0(\bx) = 0, \ \bx \notin \mathcal{D},
    \end{cases}
\end{align*}
where the spatial domain $\mathcal{D} = [-\frac{\pi}{2}, \frac{\pi}{2}] \times [-\frac{\pi}{2}, \frac{\pi}{2}] \times [-\frac{\pi}{2}, \frac{\pi}{2}] \subset \mathbb{R}^3$.
Unless otherwise specified, we use $\tau_0 = 0.01$, $\Delta t = 800$, $\eta = 0.004{\Delta x}^{\beta}$, $\alpha = 4$, $\beta = 4$, $\gamma = 0.02$, and $\alpha_0 = 0.5$.
We set the number of iterations in reinitialization to be $n_{max} =25$.

\subsubsection{General performance}
To begin, we evaluate the performance of our method on several general surface reconstruction tasks. For this experiment, we utilize six point clouds from \cite{Stanford3D,dfaust:CVPR:2017,jacobson2020common} and employ a Cartesian grid with a resolution of $200\times 200 \times 200$. As illustrated in Figure \ref{fig:Example3D}, our approach successfully reconstructs intricate surface details, including the ears of the pig and rabbit, and the tail of the armadillo. Together, these findings demonstrate the algorithm's robustness in preserving complex geometric features and underscore the effectiveness of the proposed method.
\subsubsection{Comparison with other methods}
We compare our method with OSM+Curvature and ITM on three point clouds shown in Figure \ref{fig:Original point-cloud}, including (a) an icositetrahedron surface formed by attaching a square pyramid to each face of a cube, (b) a dumbbell-shaped surface and (c) a V-shaped surface. In this comparison, we discretize the domain by Cartesian grids of size $80 \times 80 \times 80$.

\begin{figure}[t!]
    \centering
    \subfigure[]{
    \centering
    \includegraphics[width=0.28\linewidth]{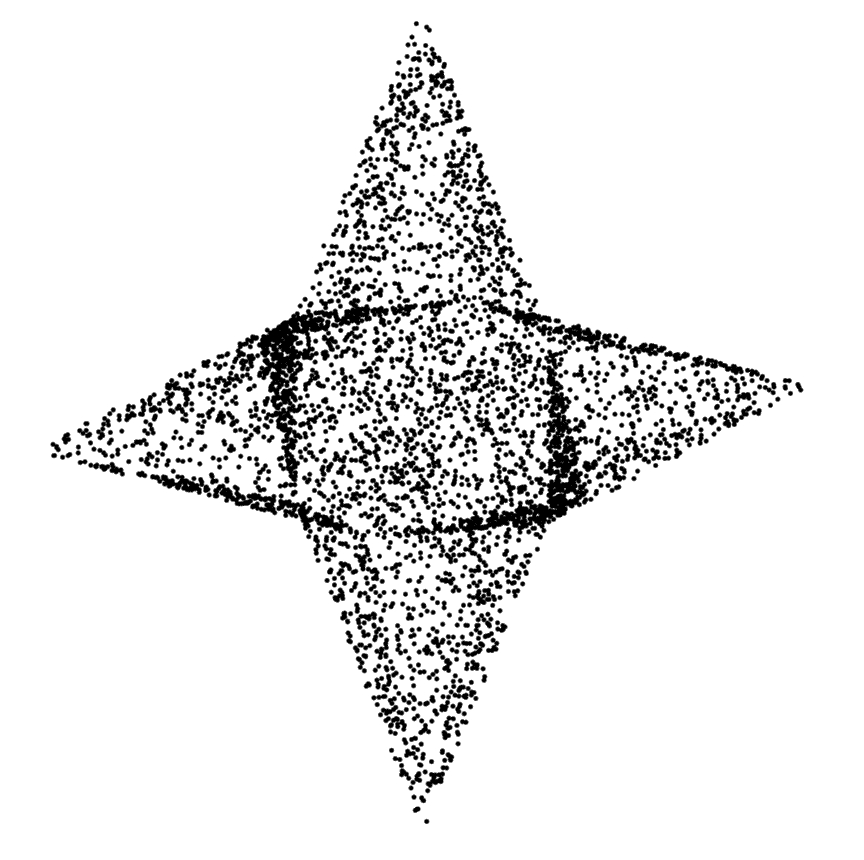}
    }
    \subfigure[]{
    \centering
    \includegraphics[width=0.22\linewidth]{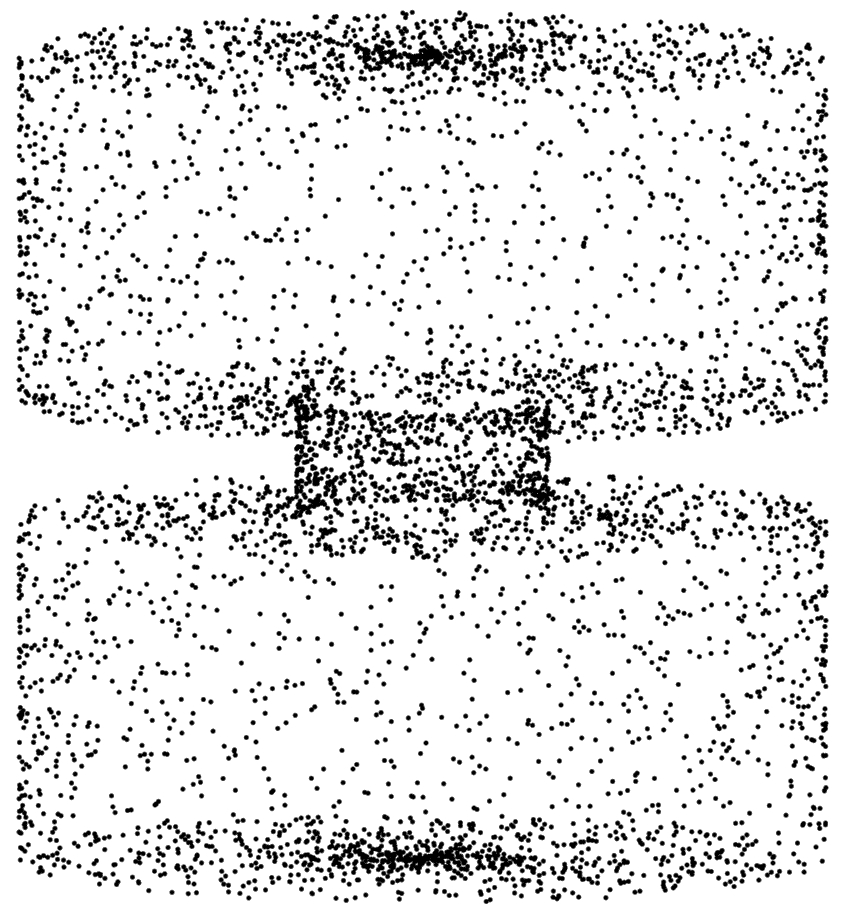}
    }
    \centering
    \subfigure[]{
    \includegraphics[width=0.25\linewidth]{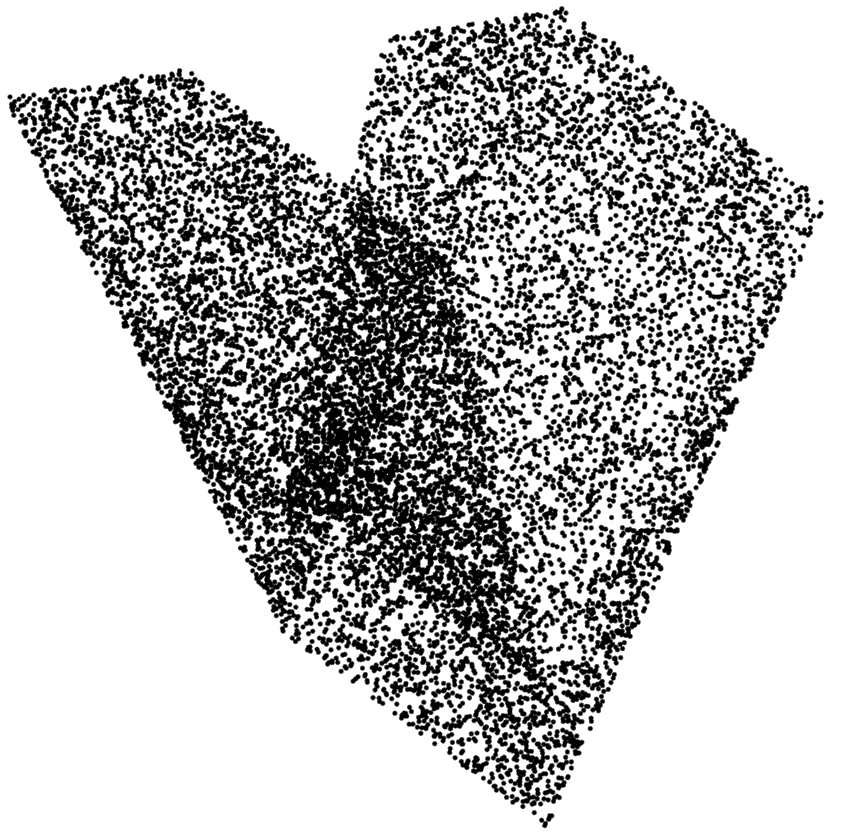}
    }
    
    \caption{(Three-dimensional example.) Point cloud data.} 
    \label{fig:Original point-cloud}
\end{figure}

\begin{figure}[t!]
    \centering
    \subfigure[ITM]{
    \centering
    \includegraphics[width=0.25\linewidth]{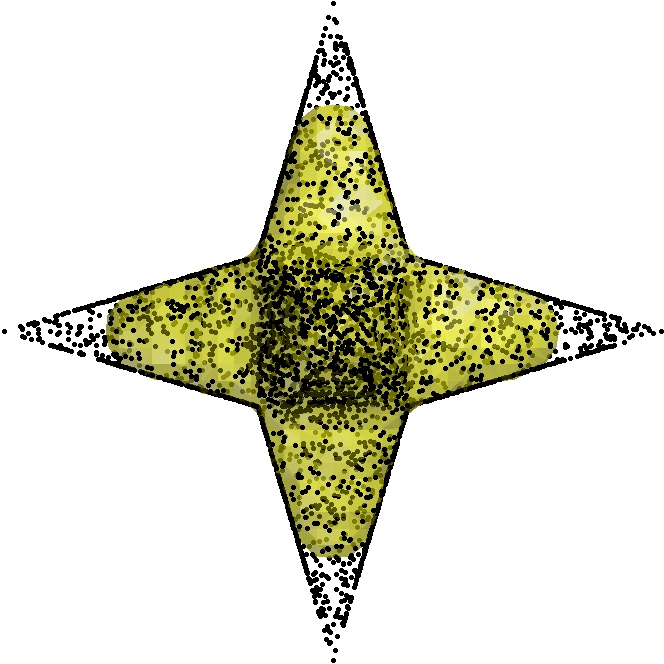}
    }
    \subfigure[Our method ]{
    \centering
    \includegraphics[width=0.25\linewidth]{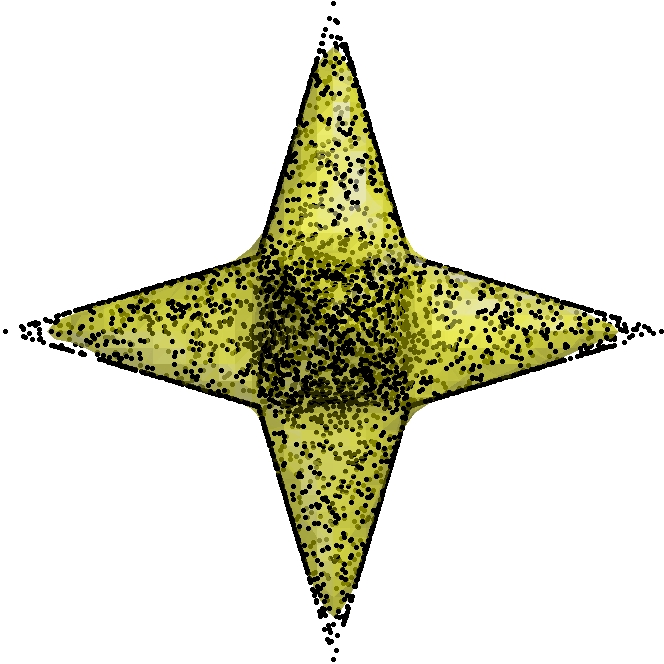}
    }
    \centering
    \subfigure[OSM+Curvature ]{
    \centering
    \includegraphics[width=0.25\linewidth]{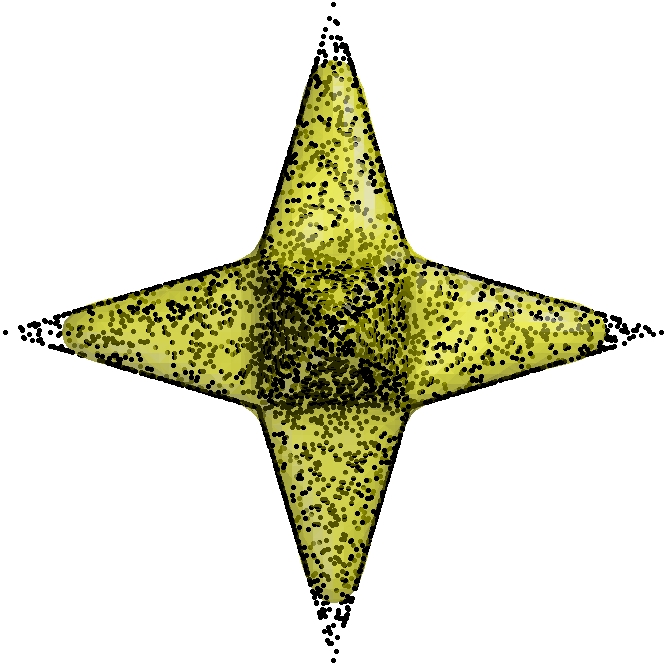}
    }
    \\
    \subfigure[Sectional view of picture(a), (b), (c) along the Y-Z plane]{
    \includegraphics[trim = 40 40 40 25,width=0.55\linewidth]{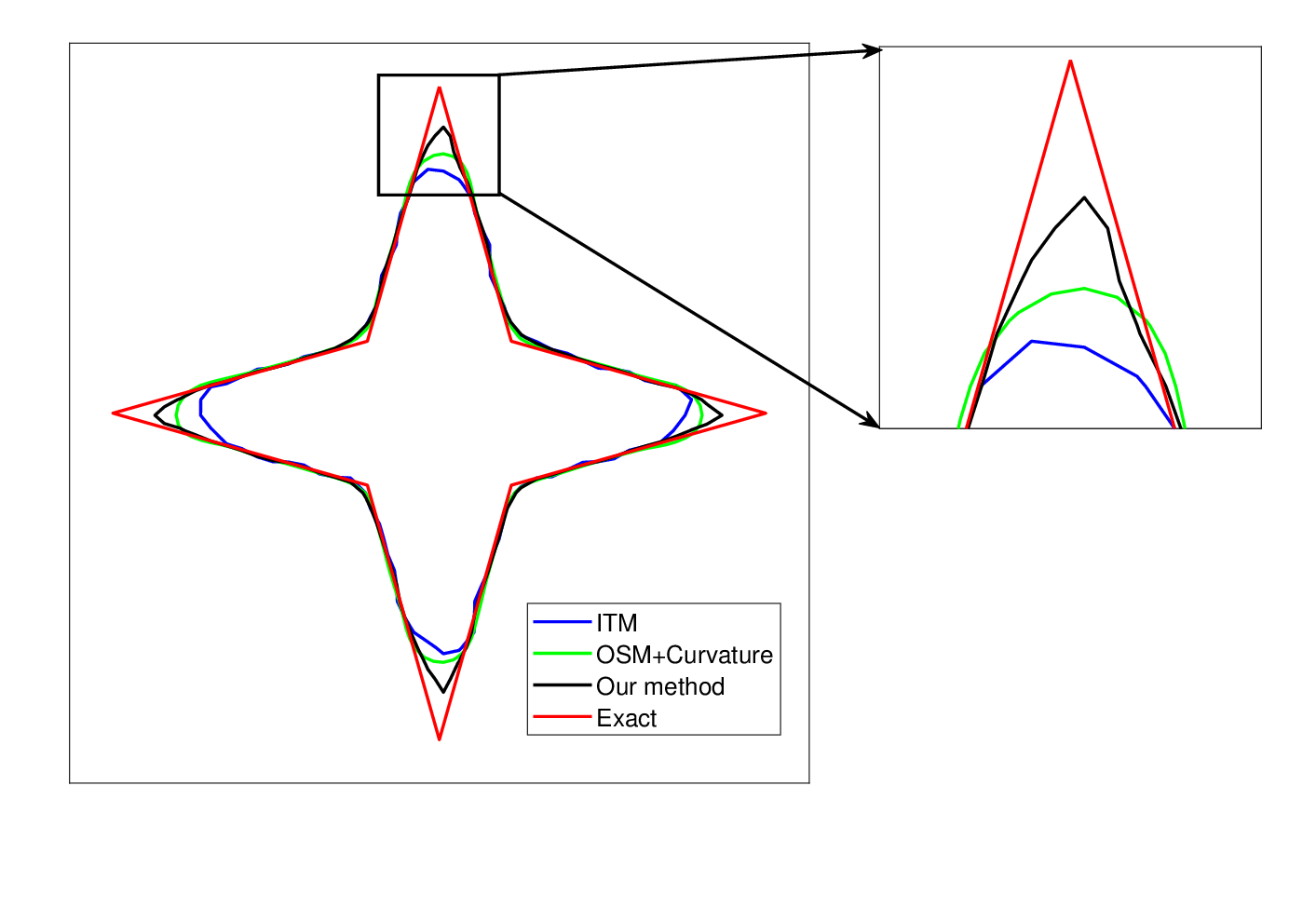}
    }
    \caption{(Three-dimensional example.) Reconstruction of the icositetrahedron (Figure \ref{fig:Original point-cloud}(a)) by ITM, OSM+Curvature and our method. }
    \label{fig:3DExperiment-1}
\end{figure}

For the icositetrahedron in Figure \ref{fig:Original point-cloud}(a), the surfaces reconstructed by the different methods are shown in Figure \ref{fig:3DExperiment-1}(a)-(c). In this experiment, we set $\tau=0.004$ for ITM and \(\eta = 5\) for OSM+Curvature. We fixed the number of iterations for OSM+Curvature at 200. Figure \ref{fig:3DExperiment-1}(d) presents the comparison of cross sections along the $Y-Z$ plane. In this experiment, both OSM+Curvature and our method effectively recover sharp-angle features, while our method yields slightly better results. This experiment shows that our method has a distinct advantage in reconstructing sharp convex corners.

\begin{figure}[t!]
    \centering
    \subfigure[ITM]{
    \includegraphics[width=0.25\linewidth]{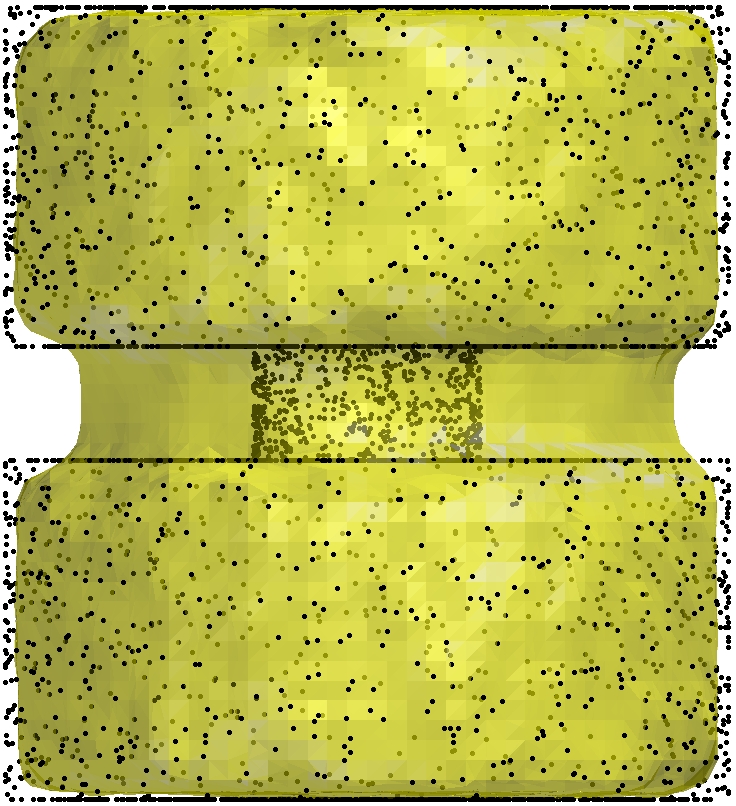}
    }
    \centering
    \subfigure[Our method]{
    \includegraphics[width=0.25\linewidth]{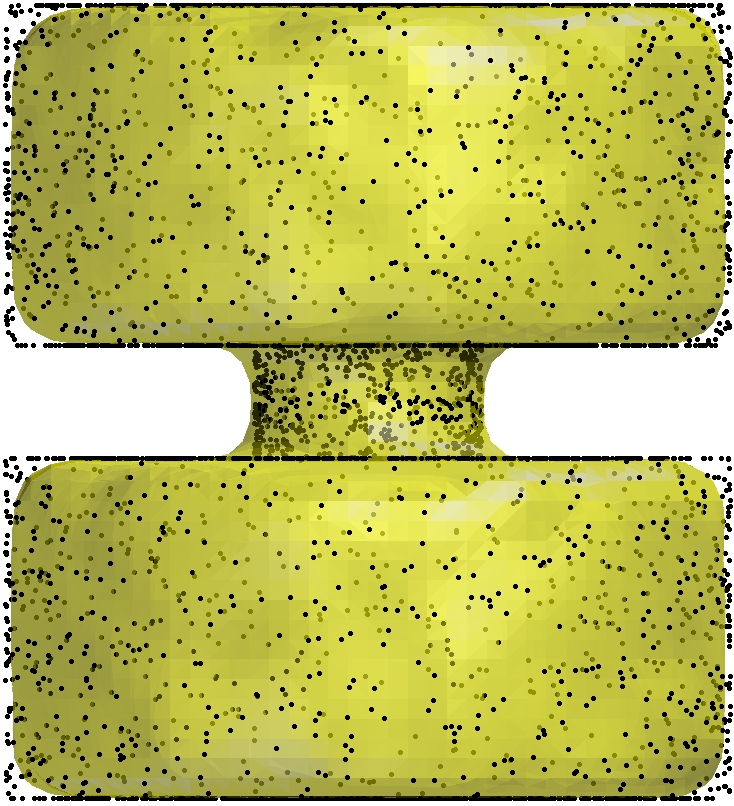}
    }
    \centering
    \subfigure[OSM+Curvature]{
    \centering
    \includegraphics[width=0.25\linewidth]{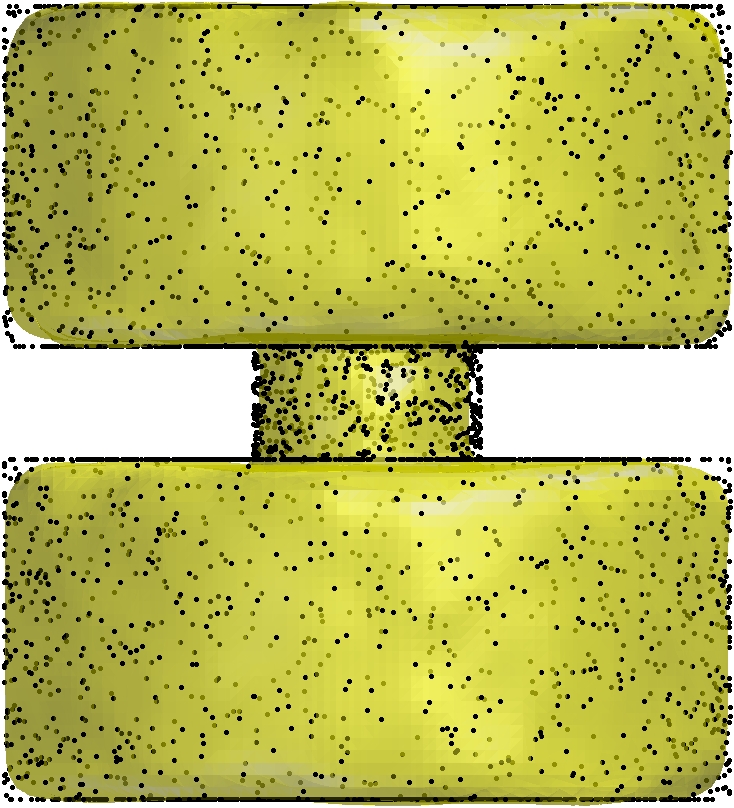}
    }
    \\
    \subfigure[Sectional view of picture(a), (b), (c) along the Y-Z plane]{
    \includegraphics[trim = 40 40 40 25,width=0.55\linewidth]{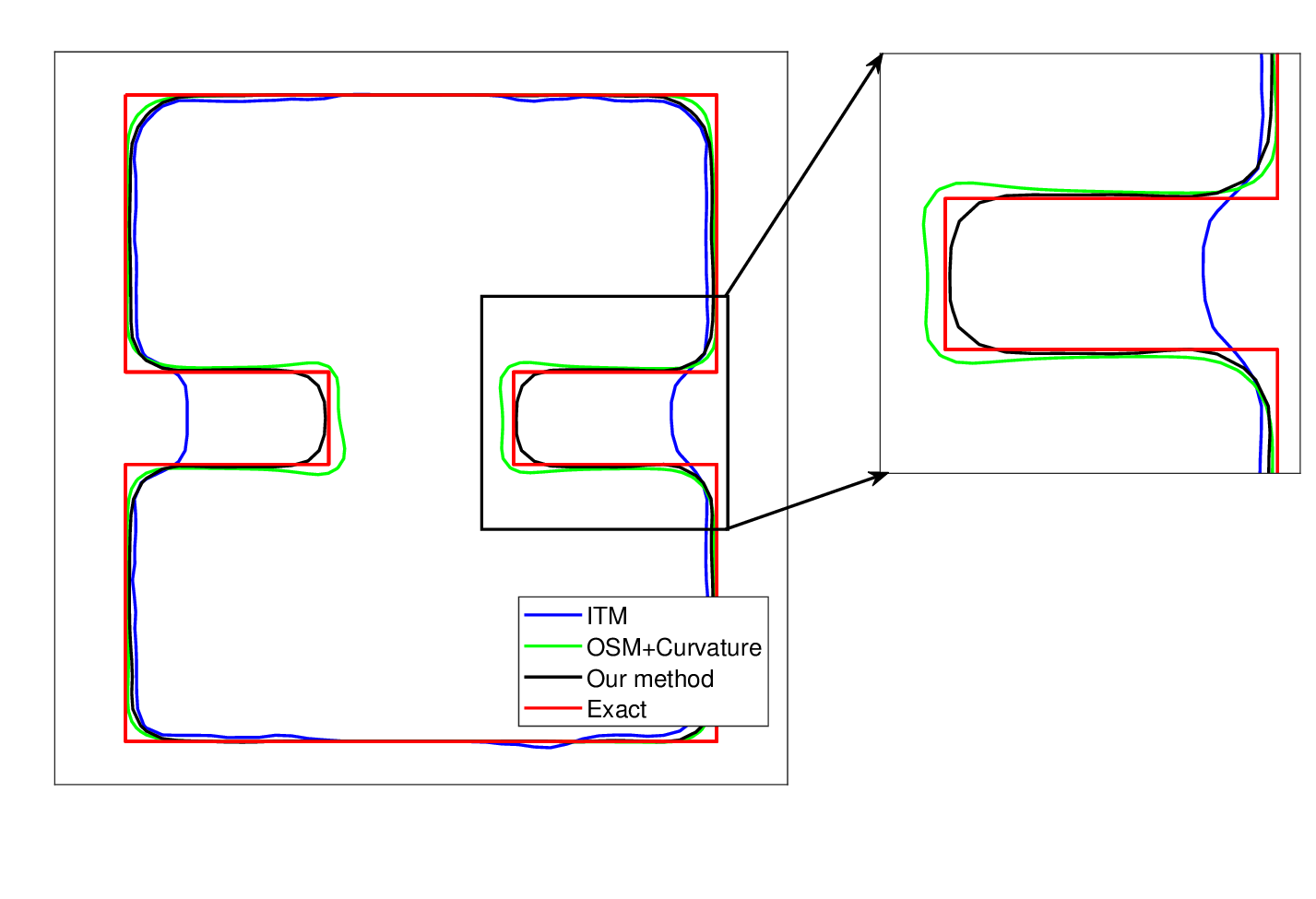}
    }
    \caption{(Three-dimensional example.) Reconstruction of the dumbbell--shaped surface (Figure \ref{fig:Original point-cloud}(b)) by ITM, OSM+Curvature and our method.}
    \label{fig:3DExperiment-2}
\end{figure}

Figure \ref{fig:3DExperiment-2} shows the surface reconstruction results for the dumbbell-shaped point cloud (Figure \ref{fig:Original point-cloud}(b)) by the three methods. We set $\tau=0.05$ for ITM and \(\eta = 5\) for OSM+Curvature. The number of iterations for OSM+Curvature was fixed at 1800. In this experiment, the ITM method fails to accurately approximate the concave connecting region of the dumbbell, whereas OSM+Curvature and our method achieve successful reconstruction. As demonstrated by the sectional views, both OSM+Curvature and our method exhibit superior performance in reconstructing concave surface regions.

\begin{figure}[t!]
    \centering
    \subfigure[ITM ]{
    \includegraphics[width=0.25\linewidth]{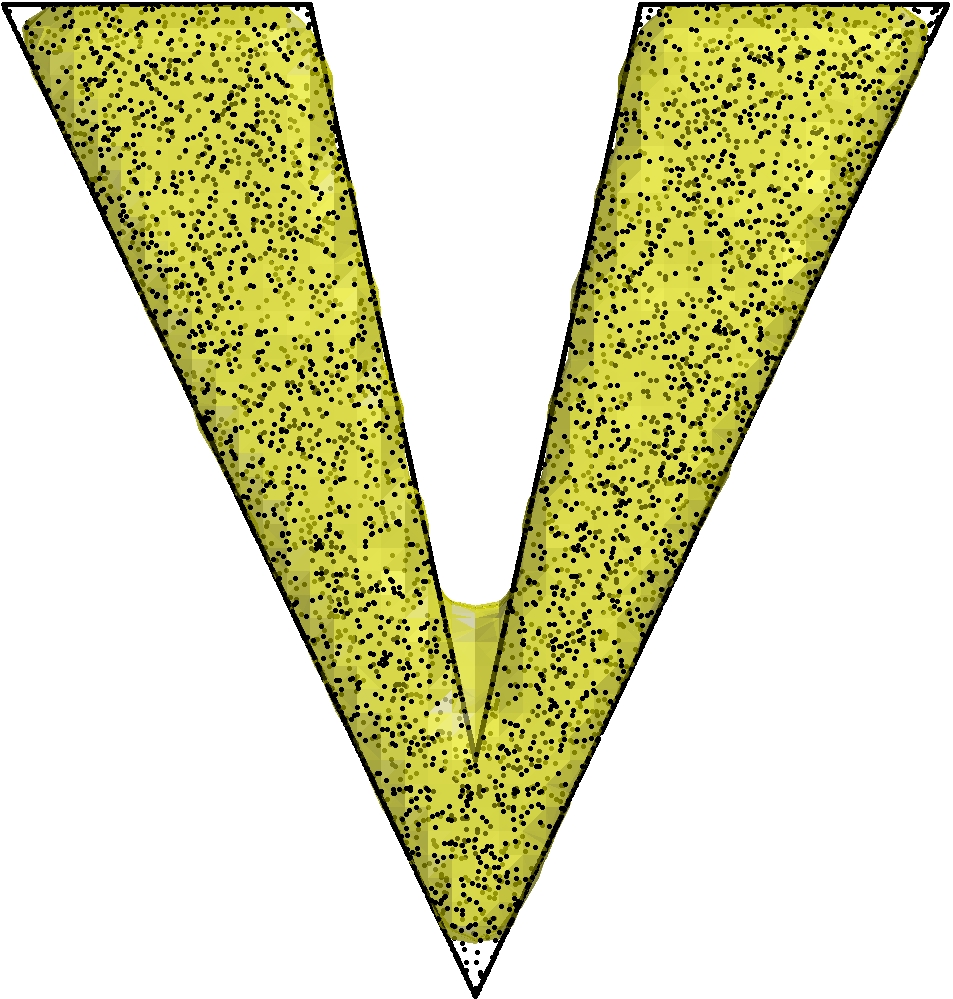}
    }
    \centering
    \subfigure[Our method ]{
    \includegraphics[width=0.25\linewidth]{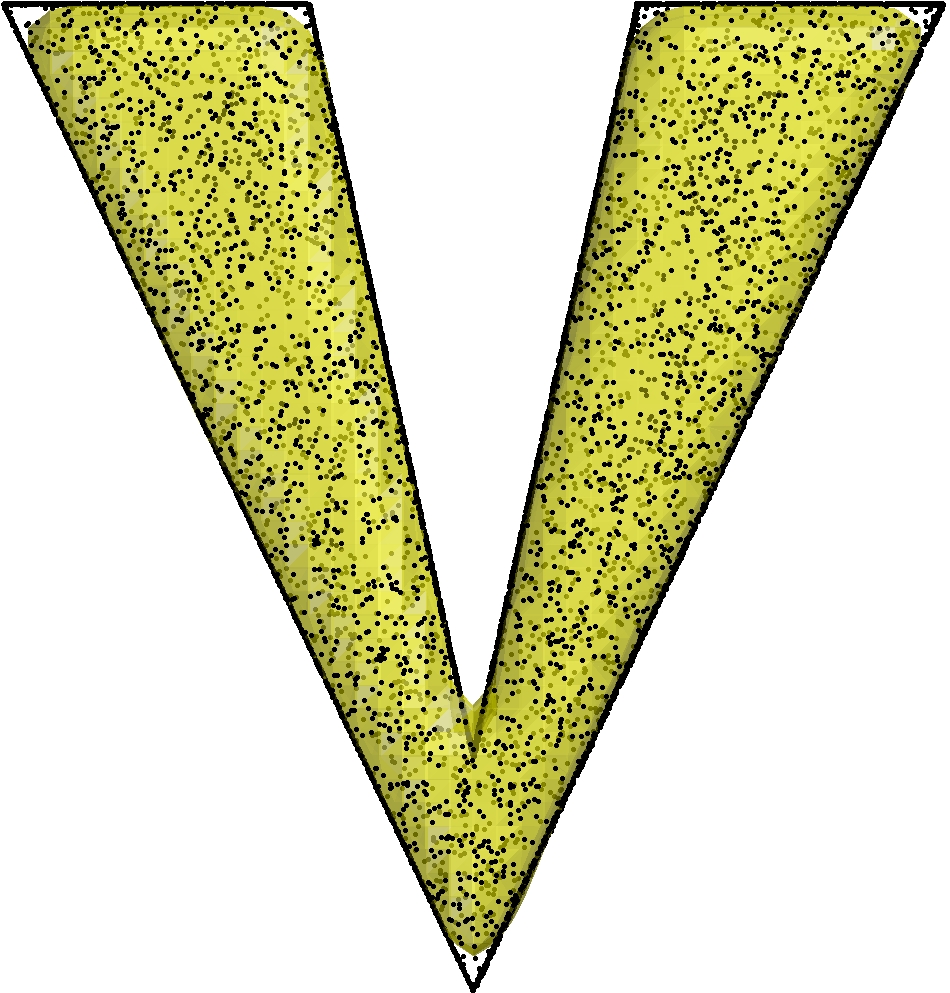}
    }
   \centering
    \subfigure[OSM+Curvature ]{
    \includegraphics[width=0.25\linewidth]{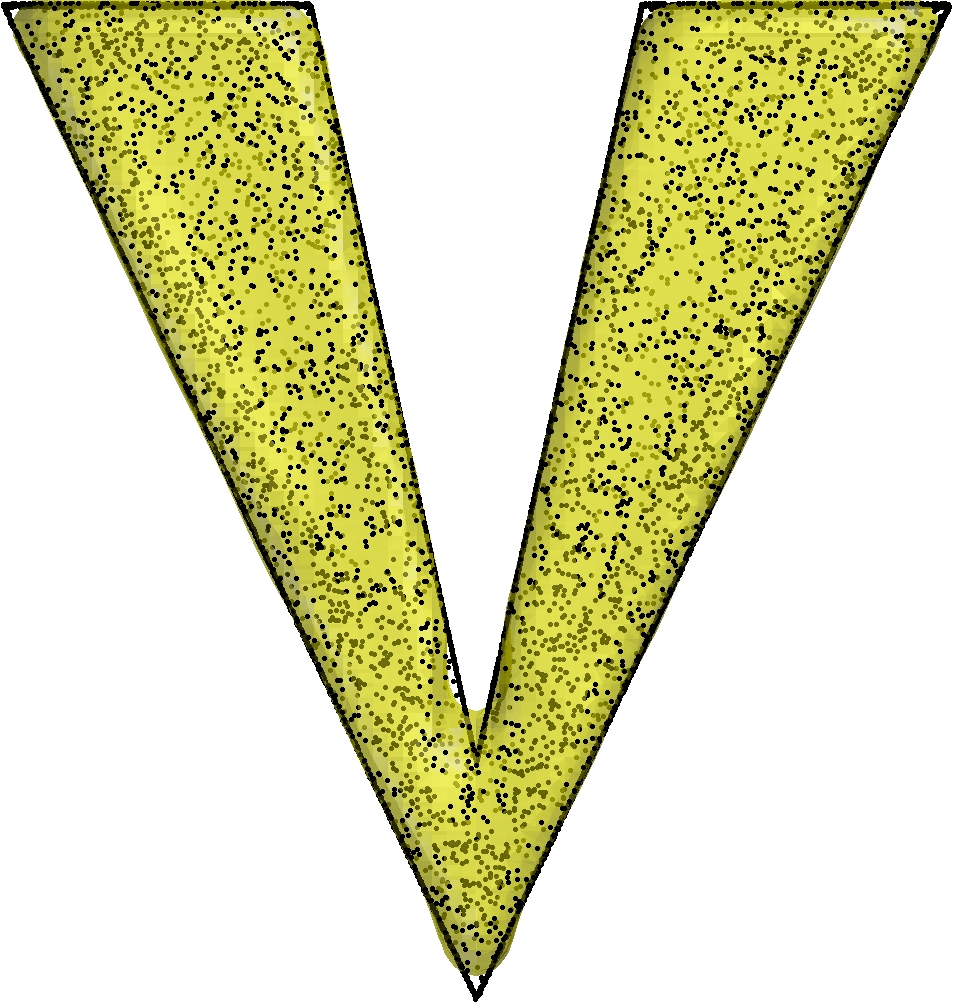}
    }
    \\
\subfigure[Sectional view of (a), (b), (c) along the Y-Z plane]{
    \includegraphics[trim = 40 40 40 25,width=0.55\linewidth]{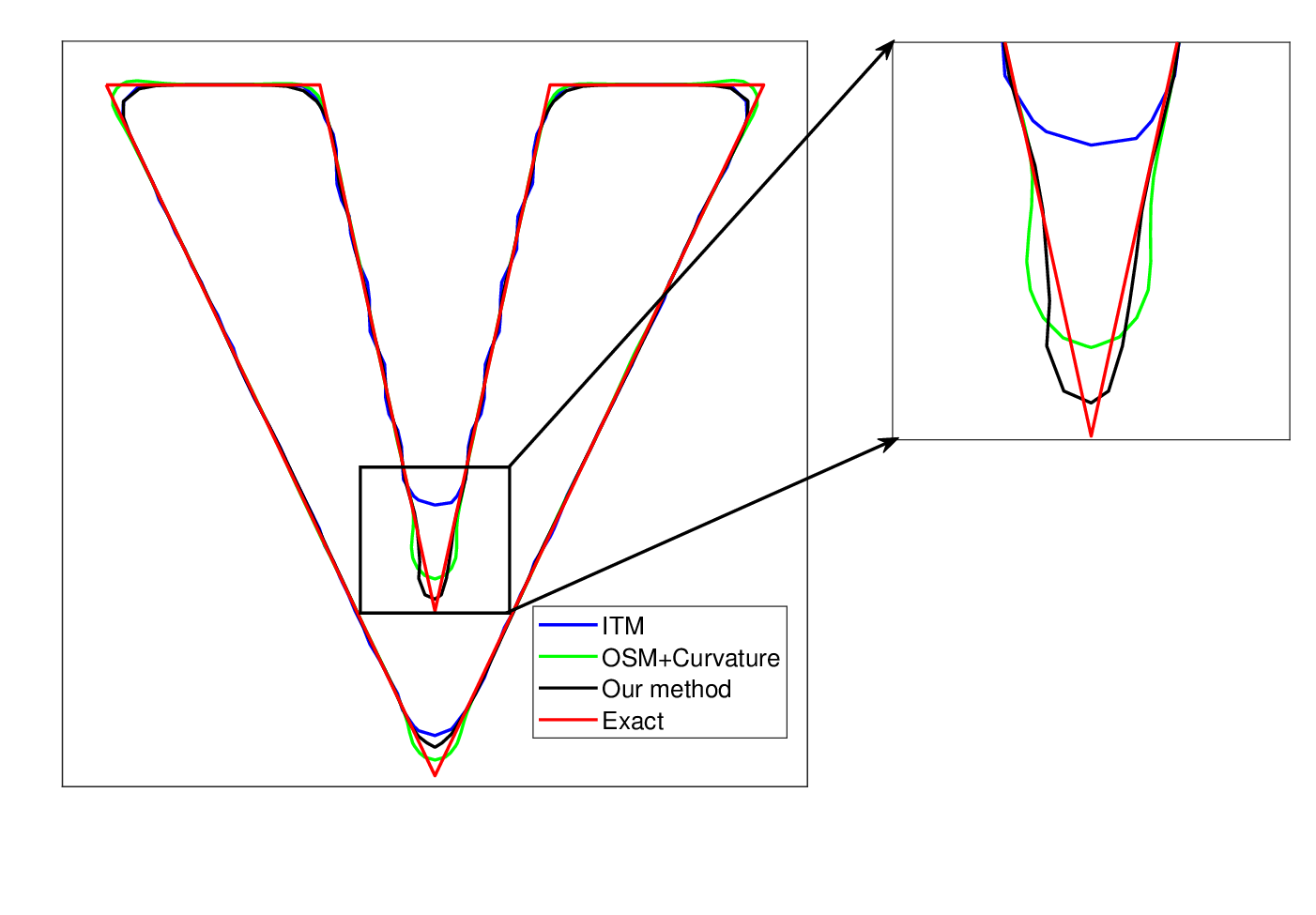}
    }
    \caption{(Three-dimensional example.) Reconstruction of the V-shape surface (Figure \ref{fig:Original point-cloud}(c)) by ITM, OSM+Curvature and our methods.}
    \label{fig:3DExperiment-3}
\end{figure}

Figure \ref{fig:3DExperiment-3} presents the reconstructed surfaces of the V-shaped surface (Figure \ref{fig:Original point-cloud}(c)), which has a distinct sharp concave corner. We set $\tau=0.01$ for ITM and \(\eta = 5\) for OSM+Curvature. The number of iterations for OSM+Curvature was fixed at 400. Our method achieves performance comparable to that of OSM+Curvature at the concave corner, and it outperforms the ITM method in handling this feature while preserving surface smoothness.

To compare the efficiency, the CPU time for all three experiments are presented in Table \ref{table:time2}. We observed that the CPU time of the ITM and our method differ by only a few seconds. In contrast, the OSM+Curvature method requires much more CPU time than the other two methods.

This comparison highlights that the proposed method yields more accurate surface reconstructions than the baseline ITM approach. Although the OSM+Curvature method achieves comparable accuracy, it requires substantially more processing time. Moreover, our method demonstrates strong generalization by requiring no parameter adjustments across all three experiments. Overall, these comparisons confirm our method's distinct advantages in sharp feature preservation, computational efficiency, and parameter robustness.

\begin{table}[t!]

\begin{center}
    
\begin{tabular}{|c | c | c | c |} 
 \hline
   & Figure \ref{fig:3DExperiment-1} & Figure \ref{fig:3DExperiment-2} & Figure \ref{fig:3DExperiment-3} \\[0.5ex]
 \hline
 ITM  & 11.35s & 6.53s & 7.04s \\ 
 \hline
OSM+Curvature  & 495.78s &  5105.63s  & 1080.10s\\
 \hline
Our method   & 12.36s & 12.77s &  13.85s \\
 \hline
\end{tabular}
\end{center}
\caption{(Three-dimensional example.) Required CPU time  to compute results shown in Figure \ref{fig:3DExperiment-1}-\ref{fig:3DExperiment-3}. }
\label{table:time2}
\end{table}

\section{Conclusion}
\label{sec.conclusion}
In this paper, we propose a novel method for surface reconstruction from point clouds. The proposed approach is based on a curvature-regularized model and represents the reconstructed surface using an indicator function. By introducing an auxiliary variable, we reformulate the original optimization problem as finding the steady-state solution to an initial value problem. An operator-splitting method was then developed to decompose the resulting problem into two subproblems, which can be efficiently solved. Owing to the indicator-function representation, one of these subproblems can be solved efficiently by iterative thresholding, thereby avoiding the costly reinitialization procedure required in traditional level set methods. Numerical experiments demonstrate that the proposed method is computationally efficient and is capable of accurately reconstructing sharp corners and concave features.

\bibliographystyle{abbrv}
\bibliography{ref}
\end{document}